\newtheorem{theorem}{Theorem}[section]
\newtheorem{corollary}[theorem]{Corollary}
\newtheorem{lemma}[theorem]{Lemma}
\theoremstyle{remark}
\newtheorem{remark}[theorem]{Remark}
\numberwithin{equation}{section}
\theoremstyle{definition}
\newtheorem{definition}{Definition}[section]
\begin{document}

\title{ACF-Monotonicity Formula on $RCD(0,N)$ Metric Measure Cones}
\author{
	Lin Sitan
	\footnotemark[1]
}
\renewcommand{\thefootnote}{\fnsymbol{footnote}}
\footnotetext[1]{E-mail address: linst3@mail3.sysu.edu.cn\newline Institute of Natural Science, Shanghai Jiao Tong University}
\date{\today}

\maketitle
\begin{abstract}
    The ACF-monotonicity formula is a powerful tool in the study of two-phase free boundary problems, which was introduced by Alt, Caffarelli, and Friedman\cite{Alt-Caffarelli-Friedman_1984}. In this paper, we extend it to $RCD(0,N)$ metric measure cones. As an application, we give a rigidity result for $RCD(0,N)$ metric measure cones.
\end{abstract}

\section{Introduction}
    In the seminal paper \cite{Alt-Caffarelli-Friedman_1984}, Alt, Caffarelli, and Friedman introduced a powerful tool, which is nowadays called \emph{ACF-monotonicity formula}. It states that: if $u_{1},u_{2}$ are continuous functions in the unit ball $B_{1} \subset \mathbb{R}^{n}$ satisfying that
    \begin{equation}
    	u_{i}(0)=0,\quad u_{i} \ge 0,\quad \Delta u_{i} \ge 0,\quad u_{1}u_{2}=0,\quad \text{in } B_{1},
    \end{equation}
    then the quantity
    \begin{equation}
    	J(r):=\frac{1}{r^{4}} \int_{B_{1}} \frac{\lvert \nabla u_{1} \rvert^{2}}{\lvert x \rvert^{n-2}} \mathrm{d}x \int_{B_{1}} \frac{\lvert \nabla u_{2} \rvert^{2}}{\lvert x \rvert^{n-2}} \mathrm{d}x
    \end{equation}
    is monotone increasing in $r \in (0,1)$. This formula has been of fundamental importance in the study of free boundary problems, especially those problems with two phases. Since then, several extensions of the ACF-monotonicity formulas has been established, including:
    \begin{itemize}
    	\item[(1)] the parabolic counterpart of the monotonicity formula \cite{Caffarelli_1993}; 
    	\item[(2)] the monotonicity formulas for more general elliptic and parabolic operators \cite{Caffarelli_1988,Caffarelli-Kenig_1998}; 
    	\item[(3)] the almost monotonicity formula for the case of $\Delta u_{i} \ge -1$ \cite{Caffarelli-Jerison-Kenig_2002} and its parabolic counterpart \cite{Edquist-Petrosyan_2008};
    	\item[(4)] the almost monotonicity formula for general elliptic and parabolic operators \cite{Matevosyan-Petrosyan_2011};
    	\item[(5)] the monotonicity formulas on Riemannian manifolds \cite{Teixeira-Zhang_2011_1,Teixeira-Zhang_2011_2}.
    \end{itemize}
    All of the works mentioned above deal with problems in smooth settings. In this paper, we will extend the ACF-monotonicity formula to non-smooth settings. More precisely, we will establish the ACF-monotonicity formula on $RCD(0,N)$ metric measure cones. The main result of this paper is the following theorem:
    \begin{theorem}\label{main_theorem}
    	Let $N \ge 2$ be an integer and let $(C(\Sigma),d_{C},m_{C})$ be the metric measure cone over an $RCD(N-2,N-1)$ metric measure space $(\Sigma,d_{\Sigma},m_{\Sigma})$ with vertex $p$. Let $u_{1},u_{2} : C(\Sigma) \to [0,\infty)$ be continuous and in $W^{1,2}_{\mathrm{loc}}(C(\Sigma))$ satisfying that:
    	\begin{itemize}
    		\item[(1)] $u_{1}(p) = u_{2}(p) = 0$;
    		\item[(2)] $u_{1} u_{2} = 0$ in $C(\Sigma)$;
    		\item[(3)] $\mathbf{\Delta} u_{i} \ge 0$ in $C(\Sigma)$, $i=1,2$.
    	\end{itemize}
        Then the quantity
        \begin{equation}
        	J(r) := \frac{1}{r^{4}} \int_{B_{r}(p)} \frac{\lvert \nabla u_{1} \rvert^{2}}{d(x,p)^{N-2}} \mathrm{d}m_{C} \int_{B_{r}(p)} \frac{\lvert \nabla u_{2} \rvert^{2}}{d(x,p)^{N-2}} \mathrm{d}m_{C}
        \end{equation}
        is monotone nondecreasing in $r \in (0,\infty)$.
    \end{theorem}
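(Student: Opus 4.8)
The plan is to transplant the classical Alt--Caffarelli--Friedman argument to the synthetic setting, the key structural fact being that by Ketterer's theorem the cone $(C(\Sigma),d_C,m_C)$ over an $RCD(N-2,N-1)$ space is an $RCD(0,N)$ space, so that the first- and second-order calculus (minimal weak upper gradients, the measure-valued Laplacian $\mathbf{\Delta}$, and the Gauss--Green formula) is at our disposal. Writing $r=d(\cdot,p)$, the first step is to record the cone calculus: the measure disintegrates as $m_C=r^{N-1}\,\mathrm dr\otimes m_\Sigma$, the minimal weak upper gradient splits as $|\nabla u|^2=|\partial_r u|^2+r^{-2}|\nabla_\Sigma u|^2$, and the weight satisfies $\mathbf{\Delta}(r^{2-N})=0$ away from $p$. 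Setting $I_i(r)=\int_{B_r(p)}r^{2-N}|\nabla u_i|^2\,\mathrm dm_C$ and $\Phi_i(r)=r^{-2}I_i(r)$, so that $J=\Phi_1\Phi_2$, I would reduce the monotonicity of $J$, after taking logarithmic derivatives, to the pointwise (a.e.\ in $r$) inequality $\frac{\mathrm d}{\mathrm dr}\log\Phi_i\ge\frac{2}{r}\big(\gamma_i(r)-1\big)$, where $\gamma_i(r)\ge 0$ is the characteristic exponent defined by $\gamma_i(\gamma_i+N-2)=\lambda_i(r)$ and $\lambda_i(r)$ is the first Dirichlet eigenvalue of $-\Delta_\Sigma$ on the slice support $\Omega_i(r):=\{\theta\in\Sigma:u_i(r,\theta)>0\}$.

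The analytic heart is this differential inequality, which I would prove by a completion-of-square computation. The co-area/disintegration formula gives $I_i(r)=\int_0^r s\,\phi_i(s)\,\mathrm ds$ with $\phi_i(s)=\int_\Sigma|\nabla u_i|^2\,\mathrm dm_\Sigma$, hence $I_i'(r)=r\phi_i(r)$, so the target is equivalent to $r^2\phi_i(r)\ge 2\gamma_i(r)I_i(r)$. Introducing the slice mass $H_i(r)=\int_\Sigma u_i(r,\cdot)^2\,\mathrm dm_\Sigma$, the subharmonicity $\mathbf{\Delta}u_i\ge 0$ together with $u_i(p)=0$ and $\mathbf{\Delta}(r^{2-N})=0$ yields, upon integrating by parts against the vector field $r^{2-N}u_i\nabla u_i$ and discarding the nonnegative term $\int_{B_r(p)}r^{2-N}u_i\,\mathrm d(\mathbf{\Delta}u_i)$, the bound $I_i(r)\le\frac{r}{2}H_i'(r)+\frac{N-2}{2}H_i(r)$. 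On the slice, the Rayleigh characterization of $\lambda_i(r)$ gives $\int_\Sigma|\nabla_\Sigma u_i|^2\,\mathrm dm_\Sigma\ge\lambda_i(r)H_i(r)$, while Cauchy--Schwarz applied to $H_i'(r)=2\int_\Sigma u_i\,\partial_r u_i\,\mathrm dm_\Sigma$ gives $\int_\Sigma|\partial_r u_i|^2\,\mathrm dm_\Sigma\ge (H_i')^2/(4H_i)$. Substituting these into $r^2\phi_i(r)=r^2\int_\Sigma|\partial_r u_i|^2\,\mathrm dm_\Sigma+\int_\Sigma|\nabla_\Sigma u_i|^2\,\mathrm dm_\Sigma$ and using $\lambda_i=\gamma_i(\gamma_i+N-2)$, the inequality $r^2\phi_i\ge 2\gamma_i I_i$ collapses to the manifestly nonnegative $\frac{1}{H_i}\big(\frac{r}{2}H_i'-\gamma_i H_i\big)^2\ge 0$.

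To conclude, I would invoke the Friedland--Hayman inequality on $\Sigma$. Since $u_1u_2=0$ and both functions are continuous, the slice supports $\Omega_1(r)$ and $\Omega_2(r)$ are disjoint open subsets of $\Sigma$ for each $r>0$; the inequality then asserts that the characteristic exponents of two disjoint domains satisfy $\gamma_1(r)+\gamma_2(r)\ge 2$. Combining with the previous step, $\frac{\mathrm d}{\mathrm dr}\log J\ge\frac{2}{r}\big(\gamma_1(r)+\gamma_2(r)-2\big)\ge 0$ for a.e.\ $r$, and the local absolute continuity of $r\mapsto\log J$ upgrades this to monotonicity on $(0,\infty)$. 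The rigidity statement advertised in the abstract should then emerge from the equality case $\gamma_1=\gamma_2=1$ of Friedland--Hayman, which pins down the slice supports as optimal half-domains and forces the cone to split.

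The main obstacle is twofold. Conceptually the deepest ingredient is a \emph{Friedland--Hayman inequality for disjoint domains on an $RCD(N-2,N-1)$ space}: classical on the round sphere, it must here be recovered synthetically, presumably via spherical symmetrization and a P\'olya--Szeg\H{o} rearrangement inequality on $RCD$ spaces built on the L\'evy--Gromov isoperimetric inequality, with its equality case supplying the rigidity. Technically, the hard part will be to make the heart rigorous without smoothness: establishing that a $W^{1,2}_{\mathrm{loc}}(C(\Sigma))$ function disintegrates into slices $u_i(r,\cdot)\in W^{1,2}(\Sigma)$ for a.e.\ $r$ with the radial/tangential energy split, that $r\mapsto I_i(r)$ and $r\mapsto H_i(r)$ are locally absolutely continuous with the stated derivatives, and that the weighted Gauss--Green computation is valid across the singular weight $r^{2-N}$ near the vertex --- the latter handled by cutting off around $p$ and using $u_i(p)=0$ and continuity to kill the inner flux. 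One must also treat separately the degenerate radii where some $I_i(r)$ vanishes or is infinite, for which $J$ is trivially monotone.
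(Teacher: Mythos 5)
Your proposal is correct and follows essentially the same route as the paper: the cone calculus and coarea disintegration, the bound $I_i(r)\le r\int_\Sigma u_i\partial_r u_i\,\mathrm dm_\Sigma+\frac{N-2}{2}H_i(r)$ obtained from $\mathbf{\Delta}u_i\ge 0$ together with $\mathbf{\Delta}(r^{2-N})=0$ and annulus Stokes formulas, the differential inequality $I_i'\ge\frac{2\gamma_i}{r}I_i$, and the Friedland--Hayman step realized by transplanting to spherical caps via the Mondino--Semola P\'olya--Szeg\H{o}/Faber--Krahn inequality. The only cosmetic difference is that you close the estimate by an integral-level Cauchy--Schwarz and completion of squares in $H_i$, whereas the paper uses a pointwise AM--GM with an optimally chosen weight $\beta_i(r)$; the two computations are equivalent.
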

    Such metric measure cone $(C(\Sigma),d_{C},m_{C})$ can be obtained by blowing up a non-collapsed $RCD(K,N)$ metric measure space at any point. $RCD(K,N)$ metric measure spaces is the class of those metric measure spaces satisfying that Ricci curvature $\ge K$ and dimension $\le N$ in the synthetic sense. Examples of $RCD(K,N)$ metric measure spaces include Euclidean spaces, Riemannian manifolds, finite dimensional Alexandrov spaces, Ricci limit spaces.
    
    Our proof of Theorem \ref{main_theorem} relies on the following two ingredients:
    \begin{itemize}
    	\item the Stokes formulas on $RCD(K,N)$-spaces established in Section \ref{subsection_Stokes_formula}. It allows us to transform the problem to the eigenvalue estimate on the $RCD(N-2,N-1)$ metric measure space $(\Sigma,d_{\Sigma},m_{\Sigma})$;
    	\item the Polya-Szego inequality for $RCD(K,N)$ metric measure spaces \cite{Mondino-Semola_2020}. In allows us to extend the classical Friedland-Hayman inequality \cite{Friedland-Hayman_1976} to the $RCD(N-2,N-1)$ metric measure spaces $(\Sigma,d_{\Sigma},m_{\Sigma})$.
    \end{itemize}
    As an application of Theorem \ref{main_theorem}, we will prove the following rigidity result:
    \begin{theorem}\label{rigidity_theorem}
    	Let $(C(\Sigma),d_{\Sigma},m_{\Sigma})$ be as in Theorem \ref{main_theorem}. If there exist $u_{1},u_{2}$ as in Theorem \ref{main_theorem} such that, for some $0<r_{1}<r_{2}<\infty$,
    	\begin{equation}
    		0<J(r_{1})=J(r_{2}) < \infty,
    	\end{equation}
    	then $(\Sigma,d_{\Sigma},m_{\Sigma})$ is a spherical suspension, i.e. there exists an $RCD(N-3,N-2)$-space $(Y,d_{Y},m_{Y})$ such that $(\Sigma,d_{\Sigma},m_{\Sigma})$ is isometric to $[0,\pi] \times_{\sin}^{N-2} Y$. If, in addition, $(\Sigma,d_{\Sigma},m_{\Sigma})$ is an $(N-1)$-dimensional smooth Riemannian manifold, then we have that:
    	\begin{itemize}
    		\item[(1)] $(\Sigma,d_{\Sigma},m_{\Sigma})$ is isometric to the sphere $\partial B_{1} \subset \mathbb{R}^{N}$;
    		\item[(2)] there exist $k_{1},k_{2}>0$ and $\nu \in \partial B_{1}$ such that
    		\begin{equation}
    			u_{1}(x)=k_{1}(x \cdot \nu)^{+},\quad u_{2}(x)=k_{2}(x\cdot\nu)^{-},\quad \text{in } B_{r_{2}}(p).
    		\end{equation}
    	\end{itemize}
    \end{theorem}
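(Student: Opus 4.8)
The plan is to read off the rigidity from the equality case in the chain of inequalities that proves Theorem \ref{main_theorem}. Recall that the monotonicity of $J$ is obtained by passing to polar coordinates $(t,\xi)\in(0,\infty)\times\Sigma$ on the cone, where $d(x,p)=t$, $m_C=t^{N-1}\,dt\,dm_\Sigma$, and $|\nabla u_i|^2=(\partial_t u_i)^2+t^{-2}|\nabla_\Sigma u_i|^2$. Writing $I_i(r):=\int_{B_r(p)}|\nabla u_i|^2\,d(x,p)^{2-N}\,dm_C$, the Stokes formulas of Section \ref{subsection_Stokes_formula} yield the key slice estimate
\[
\frac{I_i'(r)}{I_i(r)}\ \ge\ \frac{2\gamma_i(r)}{r},
\]
where $\gamma_i(r)$ is the characteristic exponent of the trace domain $\Omega_i(r):=\{\,u_i(r,\cdot)>0\,\}\subset\Sigma$, defined by $\gamma_i(\gamma_i+N-2)=\lambda_1(\Omega_i(r))$ with $\lambda_1$ the first Dirichlet eigenvalue. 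Since $u_1u_2=0$, the traces $\Omega_1(r),\Omega_2(r)$ are disjoint, so the $RCD$ Friedland--Hayman inequality (derived from the Polya--Szego inequality of \cite{Mondino-Semola_2020}) gives $\gamma_1(r)+\gamma_2(r)\ge 2$, whence $\tfrac{d}{dr}\log J=\tfrac{I_1'}{I_1}+\tfrac{I_2'}{I_2}-\tfrac4r\ge\tfrac2r(\gamma_1+\gamma_2-2)\ge0$. The hypothesis $0<J(r_1)=J(r_2)<\infty$ together with nondecreasing monotonicity forces $J$ to be constant on $[r_1,r_2]$, hence $J'=0$ a.e. there; therefore both displayed inequalities are \emph{equalities} for a.e.\ $r\in(r_1,r_2)$.

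I would first exploit the equality $I_i'/I_i=2\gamma_i/r$. Tracing through its proof, sharpness forces (a) the Rayleigh quotient $\int_\Sigma|\nabla_\Sigma u_i(r,\cdot)|^2\big/\int_\Sigma u_i(r,\cdot)^2$ to equal $\lambda_1(\Omega_i(r))$, so that the trace $u_i(r,\cdot)$ is a first Dirichlet eigenfunction of $\Omega_i(r)$, and (b) the Cauchy--Schwarz step coupling radial and spherical energies to be sharp, i.e.\ $\partial_t u_i=\tfrac{\gamma_i}{t}\,u_i$. Integrating, $u_i(t,\xi)=c_i\,t^{\gamma_i}\phi_i(\xi)$ is homogeneous, its positivity set is the cone over a fixed domain $\Omega_i=\{\phi_i>0\}$ \emph{independent of} $r$, the eigenvalue $\lambda_1(\Omega_i)$ and hence $\gamma_i$ are constant on $(r_1,r_2)$, $u_i$ is harmonic in $\{u_i>0\}$, and the Friedland--Hayman equality reads $\gamma(\Omega_1)+\gamma(\Omega_2)=2$ with $\Omega_1,\Omega_2\subset\Sigma$ disjoint.

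The main obstacle is converting this eigenvalue equality into the geometric statement about $\Sigma$. Here I would revisit the proof of the $RCD$ Friedland--Hayman inequality: for each $i$ the Polya--Szego inequality of \cite{Mondino-Semola_2020} rearranges $\phi_i$ to a spherically symmetric function on the model sphere $S^{N-1}$ without increasing its Dirichlet energy, giving $\gamma(\Omega_i)\ge\gamma(\Omega_i^\ast)$ for a geodesic cap $\Omega_i^\ast\subset S^{N-1}$ of the same measure fraction; the sharp Friedland--Hayman inequality for caps \cite{Friedland-Hayman_1976} then gives $\gamma(\Omega_1^\ast)+\gamma(\Omega_2^\ast)\ge2$, with equality exactly for complementary hemispheres. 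Equality throughout forces each Polya--Szego step to be sharp and each $\Omega_i^\ast$ to be a hemisphere, so in particular $\Omega_1,\Omega_2$ partition $\Sigma$ into two halves of equal measure and $\gamma_1=\gamma_2=1$. I would then invoke the rigidity of the $RCD$ Polya--Szego/Lévy--Gromov inequalities: sharpness of the rearrangement makes the super-level sets of $\phi_i$ isoperimetric regions saturating the Lévy--Gromov inequality on $RCD(N-2,N-1)$ spaces, and by the Cavalletti--Mondino spherical-suspension rigidity this forces $(\Sigma,d_\Sigma,m_\Sigma)$ to be isometric to $[0,\pi]\times_{\sin}^{N-2}Y$ for some $RCD(N-3,N-2)$ space $(Y,d_Y,m_Y)$, with the two halves $\{s<\pi/2\}$, $\{s>\pi/2\}$ corresponding to $\Omega_1,\Omega_2$. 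The delicate points I anticipate are: propagating the one-parameter family of equalities (valid only for a.e.\ $r$, but rendered $r$-independent by the homogeneity from Step 2) into a single rigidity statement, and verifying that the equality discussion of \cite{Mondino-Semola_2020} characterizes the ambient space itself and not merely the rearranged profile.

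Finally, suppose in addition that $\Sigma$ is a smooth $(N-1)$-dimensional Riemannian manifold. The spherical suspension structure gives $\operatorname{diam}(\Sigma)=\pi$ while $\mathrm{Ric}_\Sigma\ge N-2=\dim\Sigma-1$, so Cheng's maximal diameter theorem forces $(\Sigma,d_\Sigma,m_\Sigma)$ to be isometric to the round unit sphere $\partial B_1\subset\mathbb{R}^N$, proving $(1)$. Consequently $C(\Sigma)$ is isometric to flat $\mathbb{R}^N$, and by Step 2 each $u_i$ is a nonnegative homogeneous harmonic function of degree $\gamma_i$ with $\gamma_1=\gamma_2=1$ whose positivity sets are complementary half-spaces; hence each $u_i$ is a nonnegative linear function vanishing on a common hyperplane through $p$, i.e.\ $u_1(x)=k_1(x\cdot\nu)^{+}$ and $u_2(x)=k_2(x\cdot\nu)^{-}$ for some $k_1,k_2>0$ and $\nu\in\partial B_1$, which is $(2)$. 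That these formulas hold on all of $B_{r_2}(p)$, not just on the annulus where equality was first obtained, follows from unique continuation for the harmonic extension together with continuity and $u_i(p)=0$.
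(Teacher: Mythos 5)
Your proposal is correct and follows essentially the same route as the paper: equality in the monotonicity chain forces $\lambda(\Gamma_i(r))=\lambda(\overline{\Gamma}_i(r))$ and $\alpha_1(r)+\alpha_2(r)=2$ for a.e.\ $r$, and the suspension/sphere rigidity you derive by unwinding the Polya--Szego and L\'evy--Gromov equality cases is exactly the packaged rigidity statement of Theorem \ref{Faber_Krahn}, which the paper simply cites. The only other difference is cosmetic: you extract the linear form of $u_i$ from the radial homogeneity $\partial_t u_i=\alpha_i u_i/t$ forced by the AM--GM equality, whereas the paper writes $u_i=(a_i(r)\cdot x)^+$ and solves the resulting ODE for $a_i$; both arguments then conclude with unique continuation and $u_i(p)=0$.
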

    \begin{remark}
    	Recently, Chan, Zhang and Zhu extend the Weiss-type monotonicity formula to metric measure cones \cite{Chan-Zhang-Zhu_2021}. Moreover, they use this monotonicity formula to prove that the free boundary of one phase Bernoulli type free boundary problem on non-collapsed $RCD$ metric measure spaces is $C^{\alpha}$ regular away from a set of codimension $\ge 3$.
    \end{remark}

\section{Preliminaries}
    Throughout the paper, a metric measure space is a triple $(X,d,m)$, where $(X,d)$ is a complete, separable metric space, and $m$ is a positive Radon measure. We always assume that $\mathrm{supp} (m) = X$.
    \subsection{Calculus Tools on $RCD(K,N)$-spaces}
    Given $K \in \mathbb{R}, N \in [1,\infty]$, $CD(K,N)$-spaces, proposed by Sturm\cite{Sturm_2006_1,Sturm_2006_2} and Lott-Villani\cite{Lott-Villani_2009} independently, is a class of metric measure spaces satisfying the so-called curvature-dimension condition $CD(K,N)$, which is a generalized notion of Ricci curvature bounded from below by $K$ and dimension bounded from above by $N$. Since $CD(K,N)$-spaces include Finsler manifold, Ambrosio, Gigli, and Savar\'{e}\cite{Ambrosio-Gigli-Savare_2014} proposed the notion of $RCD(K,\infty)$-spaces later by adding the infinitesimally Hilbertian assumption to the $CD(K,\infty)$ condition, called Riemannian curvature-dimension condition, in order to rule out Finsler geometries. The finite dimensional counterpart $RCD(K,N)$-spaces was proposed by Gigli\cite{Gigli_2015}. Erbar-Kuwada-Sturm\cite{Erbar-Kuwada-Sturm_2015} and Ambrosio-Mondino-Savar\'{e}\cite{Ambrosio-Mondino-Savare_2016} introduce the notion of reduced Riemannian curvature-dimension condition $RCD^{*}(K,N)$, slitly weaker than $RCD(K,N)$ condition, and proved that $RCD^{*}(K,N)$ is equivalent to a weak formulation of Bocher inequality. Finally, Cavalletti and Milman\cite{Cavalletti-Milman_2021} show that $RCD(K,N)$ is equivalent to $RCD^{*}(K,N)$. We refer the readers to the survey\cite{Ambrosio_2018} and references therein for background of $RCD(K,N)$-spaces.
    
    We recall the notion of \emph{non-collapsed} $RCD(K,N)$-spaces introduced by De Philippis and Gigli \cite{DePhilippis-Gigli_2018}:
    \begin{definition}
    	We say that an $RCD(K,N)$-space $(X,d,m)$ is non-collapsed if $m = \mathcal{H}^{N}$, the $N$-dimensional Hausdorff measure. We also say that $(X,d,m)$ is a $ncRCD(K,N)$-space for short.
    \end{definition}
     If $(X,d,m)$ is an $ncRCD(K,N)$-space, then $N$ must be an integer, and in this case, it holds that
     \begin{equation}
     	m(B_{r}(x)) \le C_{K,N}r^{N}, \quad \forall x \in X, \forall r \in (0,1)
     \end{equation} 
     for some constant $C_{K,N} > 0$, depending on $K,N$, see \cite[Corollary 2.14]{DePhilippis-Gigli_2018}.
     
     \begin{definition}
     	Given a metric measure space $(\Sigma,d_{\Sigma},m_{\Sigma})$ with $\mathrm{diam}\Sigma \le 2\pi$ and $N \ge 2$, the metric measure cone over $(\Sigma,d_{\Sigma},m_{\Sigma})$ with vertex $p$ is defined to be the metric measure space $(C(\Sigma),d_{C},m_{C})$, where
     	\begin{itemize}
     		\item $C(\Sigma) := \Sigma \times (0,\infty) \cup \{p\}$;
     		\item For $(r_{1},\xi_{1}),(r_{2},\xi_{2}) \in C(\Sigma)$, where $r_{i} \in \mathbb{R}^{+}, \xi_{i} \in \Sigma$, the distance between them is defined to be
     		\begin{equation}
     			d_{C}\big((r_{1},\xi_{1}),(r_{2},\xi_{2})\big) := \sqrt{r_{1}^{2} + r_{2}^{2} - 2r_{1}r_{2}\cos d_{\Sigma}(\xi_{1},\xi_{2})}
     		\end{equation}
     	    \item $m_{C} := r^{N-1}\mathrm{d}r \otimes m_{\Sigma}$.
     	\end{itemize}
     \end{definition}
     The following theorem is taken from \cite[Corollary 1.3]{Ketterer_2015}.
     \begin{theorem}\label{theorem_1_preliminaries}
     	Let $(C(\Sigma),d_{C},m_{C})$ be the metric measure cone over the metric measure space $(\Sigma,d_{\Sigma},m_{\Sigma})$, and $N \ge 2$. Then $(C(\Sigma),d_{C},m_{C})$ is an $RCD(0,N)$-space if and only if $(\Sigma,d_{\Sigma},m_{\Sigma})$ is an $RCD(N-2,N-1)$-space and $\mathrm{diam}(\Sigma) \le \pi$.
     \end{theorem}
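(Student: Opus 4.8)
The plan is to prove the equivalence by splitting the Riemannian curvature-dimension condition into its two constituents, namely the curvature-dimension condition itself (equivalently, by the Cavalletti--Milman theorem, the reduced condition $CD^{*}(K,N)$) and infinitesimal Hilbertianity, and to transfer each constituent between $\Sigma$ and $C(\Sigma)$ by a separate argument. This separation is natural here because the cone construction decouples the radial and the spherical directions; moreover, the diameter bound $\mathrm{diam}(\Sigma)\le\pi$ will emerge precisely as the obstruction to branching of cone geodesics at the vertex.

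For the curvature-dimension part I would work with the Lagrangian (Jacobian) formulation of $CD(K,N)$: along an $L^{2}$-optimal dynamical plan the densities $\rho_{t}$ with respect to the reference measure satisfy
\[
\rho_{t}(\gamma_{t})^{-\frac{1}{N}} \ge \tau_{K,N}^{(1-t)}(\theta)\,\rho_{0}(\gamma_{0})^{-\frac{1}{N}} + \tau_{K,N}^{(t)}(\theta)\,\rho_{1}(\gamma_{1})^{-\frac{1}{N}},
\]
where $\theta$ is the length of the geodesic $\gamma$. The geometric backbone is the explicit description of cone geodesics: a unit-speed minimizing geodesic $t\mapsto(r(t),\xi(t))$ of $C(\Sigma)$ that avoids the vertex projects, after constant-speed reparametrization, to a minimizing geodesic of $\Sigma$ of length $\theta=d_{\Sigma}(\xi_{0},\xi_{1})\le\pi$, and the pair $(r(t),\phi(t))$ with $\phi(t)=d_{\Sigma}(\xi_{0},\xi(t))$ obeys the planar law of cosines. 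The crucial computation is then a factorization of the $N$-dimensional volume distortion of the cone into a purely radial factor and a spherical factor: the radial transport, weighted by $r^{N-1}\,\mathrm{d}r$, reproduces exactly the zero-curvature, dimension-$N$ model behavior in the variable $r$, while the spherical transport along $\Sigma$ reproduces the $(N-2,N-1)$ model behavior. Concretely this is a trigonometric identity relating $\sigma_{0,N}^{(t)}(d_{C})$ on the cone to $\sigma_{N-2,N-1}^{(t)}(\theta)$ on $\Sigma$; once it is established, the convexity inequality on $C(\Sigma)$ and the one on $\Sigma$ become equivalent. I expect this factorization to be the main obstacle, since it is exactly where the exponents $(0,N)$ and $(N-2,N-1)$ and the warping function $r\mapsto r$ must be matched.

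For the direction from the base to the cone the law of cosines produces genuine minimizing geodesics precisely when the transport keeps $d_{\Sigma}\le\pi$, which is guaranteed by $\mathrm{diam}(\Sigma)\le\pi$, and this lets the factorization run globally. Conversely, if $\mathrm{diam}(\Sigma)>\pi$ I would pick $\xi_{0},\xi_{1}$ with $d_{\Sigma}(\xi_{0},\xi_{1})>\pi$; the minimizing path between $(1,\xi_{0})$ and $(1,\xi_{1})$ is then forced through the vertex, and the two incoming rays admit distinct continuations, yielding branching geodesics that contradict the essential non-branching implied by $CD^{*}(0,N)$. To extract the curvature-dimension condition of $\Sigma$ from that of $C(\Sigma)$ I would test the cone inequality on measures lifted from $\Sigma$ onto a fixed sphere $\{r=r_{0}\}$, so that the radial factor is constant and the inequality descends to the sharp Rényi entropy convexity on $\Sigma$ with curvature parameter $N-2$ and dimension $N-1$. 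Note finally that for $N>2$ the diameter bound is automatic: Bonnet-Myers applied to an $RCD(N-2,N-1)$ space gives $\mathrm{diam}(\Sigma)\le\pi\sqrt{(N-2)/(N-2)}=\pi$, whereas for $N=2$, where $\Sigma$ is merely $RCD(0,1)$, the bound is a genuine extra hypothesis.

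To upgrade $CD$ to $RCD$ I would transfer infinitesimal Hilbertianity in both directions using the warped-product structure of the Cheeger energy. Writing a function on the cone in coordinates as $u=u(r,\xi)$, the minimal weak upper gradient decomposes as
\[
|\nabla_{C} u|^{2} = |\partial_{r} u|^{2} + r^{-2}\,|\nabla_{\Sigma} u|^{2},
\]
so that the Cheeger energy of $C(\Sigma)$ is an integral over $r$ of base energies; it is therefore a quadratic form if and only if the base Cheeger energy is quadratic, i.e. if and only if $\Sigma$ is infinitesimally Hilbertian. The functional-analytic content is the identification of $W^{1,2}(C(\Sigma))$ with the corresponding warped-product Sobolev space, after which the parallelogram identity passes between the two spaces termwise. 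Combining the curvature-dimension equivalence, the diameter discussion, and this Hilbertian transfer yields the stated characterization.
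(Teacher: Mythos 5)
The paper does not prove this statement at all: it is quoted verbatim from Ketterer \cite[Corollary 1.3]{Ketterer_2015}, so there is no internal proof to compare against. What you have written is therefore an attempt to reprove a deep external theorem, and it has a genuine gap at exactly the point you yourself flag as ``the main obstacle.'' Your plan for the curvature-dimension transfer is the Lagrangian one: describe cone geodesics via the law of cosines and factor the volume distortion into a radial factor matching the $(0,N)$ model and a spherical factor matching the $(N-2,N-1)$ model, so that the Rényi entropy convexity inequalities on $C(\Sigma)$ and on $\Sigma$ become equivalent. No such clean multiplicative factorization of the distortion coefficients $\tau^{(t)}_{K,N}$ exists: the cone distance $\sqrt{r_0^2+r_1^2-2r_0r_1\cos\theta}$ couples the radial and angular variables, and combining a one-dimensional radial concavity with an $(N-1)$-dimensional spherical one into the sharp $N$-dimensional inequality runs into the same obstruction as tensorization of $CD(K,N)$ for possibly branching spaces. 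This is precisely why the Lagrangian route (Bacher--Sturm) only handles smooth or non-branching bases and why Ketterer's actual proof abandons it entirely: he works on the Eulerian side, proving the Bakry--Émery condition $BE(0,N)$ for the cone Dirichlet form from $BE(N-2,N-1)$ on the base via the self-improvement of Bochner's inequality, and then invokes the equivalence $BE(K,N)\Leftrightarrow RCD^{*}(K,N)$ together with Cavalletti--Milman. A related structural problem with your plan: you treat infinitesimal Hilbertianity as a separate ingredient to be bolted on after the $CD$ transfer, but in the only known proof the Riemannian assumption is load-bearing for the curvature transfer itself, since $BE(K,N)$ is only available in the infinitesimally Hilbertian setting; indeed whether the cone over a general $CD(N-2,N-1)$ space is $CD(0,N)$ is not established by these methods.

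The peripheral parts of your sketch are essentially sound: the Bonnet--Myers observation that $\mathrm{diam}(\Sigma)\le\pi$ is automatic for $N>2$ and only a genuine hypothesis for $N=2$; the branching argument showing that $\mathrm{diam}(\Sigma)>\pi$ forces geodesics through the vertex and contradicts essential non-branching; and the decomposition $\lvert\nabla_{C}u\rvert^{2}=\lvert\partial_{r}u\rvert^{2}+r^{-2}\lvert\nabla_{\Sigma}u\rvert^{2}$ for transferring quadraticity of the Cheeger energy (this identity is in fact quoted in the paper from \cite[Proposition 3.4]{Ketterer_2015}, and its rigorous proof --- the identification of $W^{1,2}(C(\Sigma))$ with the warped-product Sobolev space --- is itself nontrivial). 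But none of this rescues the central step. Given that the paper treats this as a black-box citation, the appropriate course is to do the same rather than to sketch a proof whose key identity fails.
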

 
     Next, we introduce the notion of Sobolev spaces on $RCD$-spaces. We adopt the approach of Cheeger\cite{Cheeger_1999}. We point out that there are several different approach to define Sobolev spaces on metric measure spaces, see \cite{Cheeger_1999,Ambrosio-Gigli-Savare_2014_2,Shanmugalingam_2000,Hajlasz_Koskela_2000}, and all of them are equivalent to each other in $RCD$-spaces.
     
     Let $(X,d,m)$ be an $RCD(K,N)$-space. Given a Lipschitz function $f : X \to \mathbb{R}$, denoted by $f \in \mathrm{LIP}(X)$, we define the pointwise Lipschitz constant as 
     \begin{equation}
     	\mathrm{lip}(f)(x) := \begin{cases}
     		\limsup_{y \to x} \frac{\lvert f(x) - f(y)\rvert}{d(x,y)},\quad & \text{if } x \text{ is not isolated}\\
     		0,\quad & \text{if } x \text{ is isolated}
     	\end{cases}
     \end{equation}
     Given $f \in L^{2}(X,m)$, we define the Cheeger energy as
     \begin{equation}
     	\mathrm{Ch}(f) := \inf \{ \liminf_{i \to \infty} \frac{1}{2} \int_{X} \mathrm{lip}^{2}(f_{i}) \mathrm{d}m : \mathrm{LIP}(X) \ni f_{i} \to f \text{ in } L^{2}(X,m)\}.
     \end{equation}
     The Sobolev space $W^{1,2}(X)$ is defined as
     \begin{equation}
     	W^{1,2}(X) := \{f \in L^{2}(X,m) : \mathrm{Ch}(f) < \infty\},
     \end{equation}
     with the Sobolev norm 
     \begin{equation}
     	\Vert f \Vert_{W^{1,2}(X)} := \sqrt{\Vert f \Vert^{2}_{L^{2}(X,m)} + 2\mathrm{Ch}(f)}.
     \end{equation}
     The Sobolev space $(W^{1,2}(X),\Vert \cdot \Vert_{W^{1,2}(X)})$ is a Hilbert space whenever $(X,d,m)$ is an $RCD$-space, see \cite[Proposition 4.10, Theorem 5.1]{Ambrosio-Gigli-Savare_2014}.
     
     For each $f \in W^{1,2}(X)$, there exists a unique $L^{2}(X,m)$-function, denoted by $\lvert \nabla f \rvert$ and called the weak upper gradient of $f$, such that the Cheeger energy can be represented as 
     \begin{equation}
     	\mathrm{Ch}(f) = \frac{1}{2} \int_{X} \left| \nabla f \right| \mathrm{d}m,
     \end{equation}
     see \cite[Section 2]{Cheeger_1999}.
     
     Given $f,g \in W^{1,2}(X)$, it can be shown that the limit
     \begin{equation}
     	\langle \nabla f, \nabla g \rangle := \lim_{\varepsilon \to 0^{+}} \frac{1}{2\varepsilon} \big( \lvert \nabla (f + \varepsilon g) \rvert|^{2} - \lvert \nabla f \rvert^{2} \big)
     \end{equation}
     exists in $L^{1}(X,m)$, see \cite[Lemma 4.7]{Ambrosio-Gigli-Savare_2014}. Since $(X,d,m)$ is an $RCD$-space, the map
     \begin{equation}
     	W^{1,2}(X) \times W^{1,2}(X) \ni (f,g) \mapsto \langle \nabla f, \nabla g \rangle \in L^{1}(X,m)
     \end{equation}
     is symmetric, linear, and
     \begin{equation}
     	\left| \langle \nabla f, \nabla g \rangle \right| \le \left| \nabla f \right| \left| \nabla g \right|
     \end{equation}
     see \cite[Section 4.3]{Ambrosio-Gigli-Savare_2014},\cite[Section 4.3]{Gigli_2015}. Moreover, the Leibniz rule holds:
     \begin{equation}
     	\int_{X} \langle \nabla f, \nabla (gh) \rangle \mathrm{d}m = \int_{X} h \langle \nabla f, \nabla g \rangle \mathrm{d}m + \int_{X} g \langle \nabla f, \nabla h \rangle \mathrm{d}m
     \end{equation}
     for all $f,g,h \in W^{1,2}(X) \cap L^{\infty}(X,m)$.
     
     We denote by $\mathrm{LIP}_{0}(X)$ the collection of all Lipschitz function with compact support. We say that a function $f$ is locally $W^{1,2}(X)$, denoted by $f \in W^{1,2}_{\mathrm{loc}}(X)$, if $\phi f \in W^{1,2}(X)$ for all $\phi \in \mathrm{LIP}_{0}(X)$.
     
     We recall the notion of distributional Laplacian introduced by Gigli\cite{Gigli_2015}:
     \begin{definition}\label{Def_distributional_Laplacian}
     	Given $f \in W^{1,2}_{\mathrm{loc}}(X)$, its distributional Laplacian $\mathbf{\Delta} f$ is a linear functional on $\mathrm{LIP}_{0}(X)$ defined as
     	\begin{equation}
     		\mathbf{\Delta} f (\phi) := - \int_{X} \langle \nabla f, \nabla \phi \rangle \mathrm{d}m
     	\end{equation}
        for all $\phi \in \mathrm{LIP}_{0}(X)$. If there exists a signed Radon measure $\mu$ such that
        \begin{equation}
        	\mathbf{\Delta} f (\phi) = \int_{X} \phi \mathrm{d}\mu
        \end{equation}
        for all $\phi \in \mathrm{LIP}_{0}(X)$, then we say that $\mathbf{\Delta} f = \mu$ in the sense of distribution, denoted by $f \in \mathrm{D}(\mathbf{\Delta})$.
     \end{definition}
 
     The Laplacian $\mathbf{\Delta}$ is a linear operator whenever $(X,d,m)$ is an $RCD(K,N)$-space. And in this case, since $\mathrm{LIP}_{0}(X)$ is dense in $W^{1,2}(X)$, the signed Radon measure $\mu$ in Definition \ref{Def_distributional_Laplacian} is unique, and we write $\mathbf{\Delta} f := \mu$. The chain rule and the Leibniz rule also hold for the $\mathbf{\Delta}$, see \cite[Section 4.2]{Gigli_2015}. In general, $\mathbf{\Delta} f$ may not absolutely continuous with respect to $m$. We consider the Radon-Nikodym decomposition of $\mathbf{\Delta} f$
     \begin{equation}
     	\mathbf{\Delta} f = (\Delta f)m + \Delta^{s} f
     \end{equation} 
     where $\Delta f$ denotes the density of $\mathbf{\Delta} f$ with respect to $m$, $\Delta^{s} f$ denotes the singular part. When $\mathbf{\Delta} f \ll m$, we have $\mathbf{\Delta} f = (\Delta f)m$ for some $\Delta f \in L^{2}(X,m)$, and we write $f \in \mathrm{D}(\Delta)$.
     
     Given $f \in \mathrm{D}(\mathbf{\Delta})$, we say that $f$ is subharmonic, denoted by $\mathbf{\Delta} f \ge 0$, if 
     \begin{equation}
     	\mathbf{\Delta}f (\phi) = - \int_{X} \langle \nabla f, \nabla \phi \rangle \mathrm{d}m \ge 0
     \end{equation}
     for all $0 \le \phi \in \mathrm{LIP}_{0}(X)$. $f$ is superharmonic if $-f$ is subharmonic, and is harmonic if it is both subharmonic and superharmonic.
     
     Now, restricted to the $RCD(0,N)$ (with $N \ge 2$) metric measure cone $(C(\Sigma),d_{C},m_{C})$ over a metric measure space $(\Sigma,d_{\Sigma},m_{\Sigma})$, with vertex $p$. Recall that $(\Sigma,d_{\Sigma},m_{\Sigma})$ is an $RCD(N-2,N-1)$-space. We will write $C(\Sigma) \ni x = (r,\xi)$, where $r \in \mathbb{R}^{+}, \xi \in \Sigma$, and use the notation $\nabla_{\mathbb{R}^{+}}, \nabla_{\Sigma}$ to denote the gradient on $\mathbb{R}^{+},\Sigma$, respectively. Similiar for $\mathbf{\Delta}_{\Sigma}$. By \cite[Proposition 3.4]{Ketterer_2015}, it holds that
     \begin{equation}
     	\int_{C(\Sigma)} \lvert \nabla u \rvert^{2} \mathrm{d}m_{C} = \int_{\mathbb{R}^{+}} r^{N} \lvert \nabla_{\mathbb{R}^{+}} u(\cdot,\xi) \rvert^{2}(r) \mathrm{d}r + \int_{\Sigma} r^{N-2} \lvert \nabla_{\Sigma} u(r,\cdot) \rvert^{2}(\xi) \mathrm{d}m_{\Sigma}
     \end{equation}
     for all $u \in W^{1,2}(C(\Sigma))$. Therefore, for $m_{C}$-a.e. $(r,\xi) \in C(\Sigma)$, it holds that
     \begin{equation}
     	\lvert \nabla u \rvert^{2}(r,\xi) = \lvert \nabla_{\mathbb{R}^{+}} u(\cdot,\xi) \rvert^{2}(r) + r^{-2} \lvert \nabla_{\Sigma} u(r,\cdot) \rvert^{2}(\xi)
     \end{equation}
     In particular, by polarization, for all $u,v \in W^{1,2}(X)$, it holds that
     \begin{equation}
     	\begin{split}
     		\langle \nabla u, \nabla v \rangle (r,\xi) = & \frac{1}{4} \Big( \lvert \nabla (u + v) \rvert^{2}(r,\xi) - \lvert \nabla (u - v) \rvert^{2}(r,\xi) \Big)\\
     		= & \frac{1}{4} \Big( \lvert \nabla_{\mathbb{R}^{+}} (u + v)(\cdot,\xi) \rvert^{2}(r) - \lvert \nabla_{\mathbb{R}^{+}} (u - v)(\cdot,\xi) \rvert^{2}(r) \Big)\\
     		& + \frac{1}{4} r^{-2} \Big( \lvert \nabla_{\Sigma} (u + v)(r,\cdot) \rvert^{2}(\xi) + \lvert \nabla_{\Sigma} (u - v)(r,\cdot) \rvert^{2}(\xi) \Big)\\
     		= & \langle \nabla_{\mathbb{R}^{+}} u(\cdot,\xi), \nabla_{\mathbb{R}^{+}} v(\cdot,\xi) \rangle (r) + r^{-2} \langle \nabla_{\Sigma} u(r,\cdot), \nabla_{\Sigma} v(r,\cdot) \rangle (\xi)
     	\end{split}
     \end{equation}
     for $m_{C}$-a.e. $(r,\xi) \in C(\Sigma)$.
     
     We recall a result of Mondino-Somela \cite{Mondino-Semola_2020}, restricted to the $RCD(N-2,N-1)$-space $(\Sigma,d_{\Sigma},m_{\Sigma})$ ($N \ge 2$ is an integer): given an open domain $\Gamma \subset \Sigma$, denote by $\lambda(\Gamma)$ the first eigenvalue, that is
     \begin{equation}
     	\lambda(\Gamma) := \inf \frac{\int_{\Gamma} \vert \nabla f \rvert^{2} \mathrm{d}m_{\Sigma}}{\int_{\Gamma} \lvert f \rvert^{2} \mathrm{d}m_{\Sigma}}
     \end{equation}
     where the $\inf$ is taken over all $f \in \mathrm{LIP}_{0}(\Gamma)$ with $f \not\equiv 0$. Let $\overline{\Gamma}$ be a geodesic ball in the standard sphere $\partial B_{1} \subset \mathbb{R}^{N}$ with
     \begin{equation}
     	\frac{m_{\Sigma}(\Gamma)}{m_{\Sigma}(\Sigma)} = \frac{\mathrm{Vol}(\overline{\Gamma})}{\mathrm{Vol}(\partial B_{1})}
     \end{equation}
  and $\lambda(\overline{\Gamma})$ be the first eigenvalue of $\overline{\Gamma}$. Then we have the following theorem \cite[Theorem 1.5, Theorem 1.6, Corollary 1.7]{Mondino-Semola_2020}:
     \begin{theorem}\label{Faber_Krahn}
     	It holds that $\lambda(\Gamma) \ge \lambda(\overline{\Gamma})$. If the equality holds, then $(\Sigma,d_{\Sigma},m_{\Sigma
     	})$ is a spherical suspension, i.e. there exists an $RCD(N-3,N-2)$-space $(Y,d_{Y},m_{Y})$ such that $(\Sigma,d_{\Sigma},m_{\Sigma})$ is isomorphic to $[0,\pi]\times_{\sin}^{N-2}Y$. If, in addition, $(\Sigma,d_{\Sigma},m_{\Sigma})$ is an $(N-1)$-dimensional smooth Riemannian manifold, then $(\Sigma,d_{\Sigma},m_{\Sigma})$ is isometric to the sphere $\partial B_{1} \subset \mathbb{R}^{N}$.
     \end{theorem}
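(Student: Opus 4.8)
The plan is to derive this eigenvalue comparison by \emph{symmetrization}, taking the Pólya-Szegő inequality on $RCD(N-2,N-1)$-spaces (the second ingredient advertised in the introduction) as the central analytic input. The guiding principle is that the round sphere $\partial B_{1}\subset\mathbb{R}^{N}$ is precisely the model space attached to the synthetic bound $RCD(N-2,N-1)$, so every Rayleigh competitor on $\Sigma$ can be transplanted to a rotationally symmetric competitor on $\partial B_{1}$ without increasing its quotient. Recall that $\lambda(\Gamma)$ is the infimum of $\int_{\Gamma}\lvert\nabla f\rvert^{2}/\int_{\Gamma}\lvert f\rvert^{2}$ over $0\not\equiv f\in\mathrm{LIP}_{0}(\Gamma)$, and note that we may restrict to $f\ge 0$ since the Rayleigh quotient of $\lvert f\rvert$ does not exceed that of $f$.

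First I would recall the spherically decreasing rearrangement. Given $0\le f\in\mathrm{LIP}_{0}(\Gamma)$, let $f^{\ast}$ be the function on $\partial B_{1}$, symmetric about a fixed pole, whose superlevel sets are geodesic balls chosen so that the \emph{relative} volumes match, $\mathrm{Vol}(\{f^{\ast}>t\})/\mathrm{Vol}(\partial B_{1})=m_{\Sigma}(\{f>t\})/m_{\Sigma}(\Sigma)$ for every $t>0$. Because the two distribution functions then agree up to the fixed factor $m_{\Sigma}(\Sigma)/\mathrm{Vol}(\partial B_{1})$, every $L^{p}$ norm is preserved up to that common factor, so it cancels in the Rayleigh quotient; and the Pólya-Szegő inequality asserts that the Dirichlet energy does not increase under this rearrangement (up to the same factor). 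This is the only step that genuinely consumes the curvature-dimension hypothesis, through the Lévy-Gromov isoperimetric inequality fed into the coarea formula.

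Next I would run the comparison of Rayleigh quotients. Since $f$ is supported in $\Gamma$, the set $\{f>0\}$ has relative volume at most that of $\Gamma$, so $f^{\ast}$ is supported in a geodesic ball contained in $\overline{\Gamma}$ and is therefore admissible for $\lambda(\overline{\Gamma})$. Combining the two observations gives, for every such $f$,
\[
\lambda(\overline{\Gamma}) \;\le\; \frac{\int_{\partial B_{1}} \lvert \nabla f^{\ast} \rvert^{2}}{\int_{\partial B_{1}} \lvert f^{\ast} \rvert^{2}} \;\le\; \frac{\int_{\Gamma} \lvert \nabla f \rvert^{2}}{\int_{\Gamma} \lvert f \rvert^{2}},
\]
and taking the infimum over $f$ yields $\lambda(\Gamma)\ge\lambda(\overline{\Gamma})$.

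For the rigidity, I would first produce a nonnegative first eigenfunction $u$ on $\Gamma$ by the direct method, using the compactness of the embedding of the $W^{1,2}$-closure of $\mathrm{LIP}_{0}(\Gamma)$ into $L^{2}(\Gamma)$ valid on bounded domains of $RCD$-spaces. If $\lambda(\Gamma)=\lambda(\overline{\Gamma})$, feeding $u$ through the chain above forces equality in the Pólya-Szegő inequality for $u$. I would then invoke the rigidity half of Mondino-Semola's Pólya-Szegő theorem: equality for a nonconstant function saturates the isoperimetric inequality along the superlevel sets of $u$, and the rigidity of the Lévy-Gromov isoperimetric inequality on $RCD(N-2,N-1)$-spaces compels $(\Sigma,d_{\Sigma},m_{\Sigma})$ to be a spherical suspension $[0,\pi]\times_{\sin}^{N-2}Y$ over an $RCD(N-3,N-2)$-space $Y$. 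In the additional smooth case the suspension must be regular at both poles, which together with $\mathrm{Ric}\ge N-2$ forces $Y=\partial B_{1}\subset\mathbb{R}^{N-1}$ and hence $\Sigma\cong\partial B_{1}\subset\mathbb{R}^{N}$. The main obstacle is exactly this rigidity analysis: establishing the sharp equality case of Pólya-Szegő in the non-smooth setting, and promoting level-set isoperimetric saturation to the global suspension splitting, is the delicate part, whereas the inequality itself is routine once Pólya-Szegő is granted.
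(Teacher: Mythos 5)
This theorem is not proved in the paper at all: it is quoted verbatim from Mondino--Semola \cite{Mondino-Semola_2020} (their Theorems 1.5, 1.6 and Corollary 1.7), and your outline reproduces exactly the strategy of that reference --- the inequality via spherically decreasing rearrangement onto the model sphere and the P\'olya--Szeg\H{o} inequality (itself resting on L\'evy--Gromov plus the coarea formula), and the rigidity via the equality case of P\'olya--Szeg\H{o} forcing saturation of the isoperimetric inequality along level sets and hence the spherical-suspension splitting, with the smooth case reducing to Obata-type rigidity. Your sketch is correct in substance; the genuinely delicate points you flag (existence and regularity of the rearranged competitor in the non-smooth setting, and promoting level-set isoperimetric saturation to the global suspension structure) are precisely the technical content of the cited paper and cannot be dispatched in a paragraph, which is presumably why the present paper cites the result rather than proving it.
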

\subsection{Friedland-Hayman Inequality}
At first, we recall the notion of characteristic constant. Let $\Gamma \subset \partial B_{1}$ be open and let $C(\Gamma)$ be the cone generated by $\Gamma$ with vertex $0$, i.e.
\begin{equation}
	C(\Gamma) := \{r\theta: r >0,\theta \in \Gamma\}.
\end{equation}
Consider a homogeneous harmonic function $h$ on $C(\Gamma)$, i.e. $h$ is harmonic in $C(\Gamma)$ and has the form
\begin{equation}
	h(r\theta) = r^{\alpha}f(\theta),\quad \text{for some } \alpha>0.
\end{equation}
We also assume that $h=0$ on $\partial C(\Gamma)$. If we let $\Delta_{\theta}$ be the Laplace-Beltrami operator on $\partial B_{1}$, then we have that
\begin{equation}
	\Delta h = r^{\alpha-2}\Big(\big(\alpha(n-2+\alpha)f\big)+\Delta_{\theta}f \Big).
\end{equation}
Then $h$ is harmonic if and only if $f$ is an eigenfunction of $\Delta_{\theta}$, i.e.
\begin{equation}
	-\Delta_{\theta}f = \alpha(n-2+\alpha)f.
\end{equation}
If $h>0$, then $\alpha(n-2+\alpha)=\lambda(\Gamma)$ is the first eigenvalue. Then corresponding $\alpha$ is called the characteristic constant of $\Gamma$, denoted by $\alpha(\Gamma)$.

We will need the following Friedland-Haymen inequality. It has been first established by Friedland and Hayman \cite{Friedland-Hayman_1976} and later by Kawohl \cite{Kawohl_1986} for the case of equality when $n>3$. See also the book \cite[Chapter 12]{Caffarelli-Salsa_2005} for a proof.
\begin{theorem}\label{Friedland-Haymen_inequality}
	Let $\Gamma_{1},\Gamma_{2}$ be two disjoint open subsets of $\partial B_{1}$. Then it holds that
	\begin{equation}
		\alpha(\Gamma_{1})+\alpha(\Gamma_{2}) \ge 2.
	\end{equation}
	The equality holds if and only if $\Gamma_{1},\Gamma_{2}$ are half-spheres.
\end{theorem}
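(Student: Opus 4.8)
The plan is to reduce the two-domain inequality to a single-variable statement about complementary spherical caps, and then to settle that statement by analysing the separated eigenvalue equation. The starting observation is that the characteristic constant is a monotone function of the first eigenvalue: from $\alpha(\Gamma)\big(n-2+\alpha(\Gamma)\big)=\lambda(\Gamma)$ we get
\begin{equation}
	\alpha(\Gamma)=\frac{-(n-2)+\sqrt{(n-2)^{2}+4\lambda(\Gamma)}}{2},
\end{equation}
so $\alpha$ is a strictly increasing function of $\lambda$, and it suffices to control $\lambda(\Gamma_{1}),\lambda(\Gamma_{2})$. First I would symmetrize. By spherical rearrangement, i.e. the Faber--Krahn inequality on $\partial B_{1}$ which is the special case $\Sigma=\partial B_{1}$ of Theorem \ref{Faber_Krahn}, replacing each $\Gamma_{i}$ by the geodesic cap $\Gamma_{i}^{*}$ of equal measure does not increase the first eigenvalue, so $\alpha(\Gamma_{i}^{*})\le\alpha(\Gamma_{i})$ and hence $\alpha(\Gamma_{1})+\alpha(\Gamma_{2})\ge\alpha(\Gamma_{1}^{*})+\alpha(\Gamma_{2}^{*})$. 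It therefore suffices to treat two caps.

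Since $\Gamma_{1},\Gamma_{2}$ are disjoint we have $m(\Gamma_{1}^{*})+m(\Gamma_{2}^{*})\le m(\partial B_{1})$, so placing the two caps at antipodal poles gives angular radii $\theta_{1},\theta_{2}$ with $\theta_{1}+\theta_{2}\le\pi$. Because $\alpha$ is strictly decreasing in the cap radius (the Dirichlet eigenvalue is strictly monotone under inclusion), enlarging the caps until $\theta_{1}+\theta_{2}=\pi$ only decreases the sum. Thus everything reduces to proving, for a pair of complementary caps,
\begin{equation}
	\alpha(\theta)+\alpha(\pi-\theta)\ge 2\qquad(0<\theta<\pi),
\end{equation}
with equality precisely at $\theta=\pi/2$; here $\alpha(\pi/2)=1$, since the cone over a half-sphere is a half-space whose positive harmonic function vanishing on the boundary is the linear coordinate $x_{n}$, homogeneous of degree one.

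This one-variable inequality is the heart of the matter. Writing $\psi$ for the polar angle, the first Dirichlet eigenfunction of the cap of radius $\theta$ depends only on $\psi$ and solves
\begin{equation}
	u''+(n-2)\cot\psi\,u'+\lambda u=0,\qquad u'(0)=0,\quad u(\theta)=0,
\end{equation}
so that $\lambda=\lambda(\theta)$, and hence $\alpha(\theta)$, is governed by this ODE. The plan is to show that $\theta\mapsto\alpha(\theta)$ is convex on $(0,\pi)$, whence $\alpha(\theta)+\alpha(\pi-\theta)\ge 2\alpha(\pi/2)=2$ by midpoint convexity, with equality forcing $\theta=\pi/2$ by strict convexity. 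The convexity itself I would obtain by introducing the logarithmic-derivative (Riccati) variable $w=u'/u$ and differentiating the eigenvalue relation in $\theta$, turning the problem into a monotonicity/comparison statement for a first-order equation. I expect this Riccati analysis to be the main obstacle: the two-dimensional case is elementary, since there $\alpha(\theta)=\pi/(2\theta)$ is manifestly convex, but general $n$ is exactly the technical core carried out by Friedland and Hayman, with the equality case in higher dimensions completed by Kawohl.

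Finally, for the equality statement I would trace equality back through each reduction. The identity $\alpha(\Gamma_{1})+\alpha(\Gamma_{2})=2$ forces equality in the Faber--Krahn step for both domains, so by the rigidity of spherical symmetrization each $\Gamma_{i}$ is a geodesic cap; it forces $m(\Gamma_{1})+m(\Gamma_{2})=m(\partial B_{1})$, since otherwise a strict enlargement would strictly decrease the sum, so the caps are complementary; and it forces equality in the complementary-cap inequality, hence $\theta=\pi/2$. Together these imply that $\Gamma_{1}$ and $\Gamma_{2}$ are complementary half-spheres, as claimed.
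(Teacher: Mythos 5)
First, a point of comparison: the paper does not actually prove Theorem \ref{Friedland-Haymen_inequality}; it quotes it from Friedland--Hayman \cite{Friedland-Hayman_1976}, Kawohl \cite{Kawohl_1986} and \cite[Chapter 12]{Caffarelli-Salsa_2005}, so there is no internal proof to measure your argument against. Your reduction is the standard one and is correct as far as it goes: $\alpha$ is an increasing function of $\lambda$ via $\alpha(n-2+\alpha)=\lambda$, spherical symmetrization does not increase $\lambda$ and hence not $\alpha$, disjointness lets you place the two caps antipodally with $\theta_{1}+\theta_{2}\le\pi$, and strict domain monotonicity of the Dirichlet eigenvalue reduces everything to a pair of complementary caps. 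Up to that point I have no objection, and the trace-back of the equality case through these reductions is the right skeleton.

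The genuine gap is the core one-variable inequality $\alpha(\theta)+\alpha(\pi-\theta)\ge 2$. You propose to deduce it from convexity of $\theta\mapsto\alpha(\theta)$ on $(0,\pi)$ and then explicitly leave that convexity unproved; but worse than being unproved, it is false in general. Take $n=3$. A point has zero capacity on the $2$-sphere, so $\alpha(\theta)\to 0$ as $\theta\to\pi^{-}$; if $\alpha$ were convex on $(0,\pi)$, comparison with the chord joining $(\pi/2,1)$ to $(\pi,0)$ would force $\alpha(\pi-s)\le 2s/\pi$ for small $s$. On the other hand, the cap of radius $\pi-s$ is the $2$-sphere with a small geodesic disc of radius $s$ removed, and the classical small-obstacle asymptotic gives $\lambda(\pi-s)\sim \tfrac{1}{2}\lvert\log s\rvert^{-1}$, hence $\alpha(\pi-s)\sim\lambda(\pi-s)\gg s$. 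So $\alpha$ is not convex near $\theta=\pi$ when $n=3$, midpoint convexity is unavailable, and the Riccati/convexity plan cannot be completed as stated. The complementary-cap inequality is of course true, but it must be proved by the actual Friedland--Hayman mechanism (explicit Gegenbauer-type eigenfunction estimates in $\theta$, or the later optimal-transport proof of Beck and Jerison), which is exactly the step your proposal defers. The equality statement inherits the same gap and is itself delicate --- the case $n>3$ is Kawohl's separate contribution --- so neither the inequality nor its rigidity is established by the proposal.
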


\section{ACF-Monotonicity Formula}
\subsection{Stokes Formula on Annuli}\label{subsection_Stokes_formula}
    In this section, we give two Stokes formulas on $RCD(K,N)$-spaces that will be needed later in the proof of Theorem \ref{main_theorem}. The first one is taken from \cite[Remark 5.2]{Chan-Zhang-Zhu_2021}:
    \begin{lemma}\label{Stokes_1}
    	Let $(X,d,m)$ be an $RCD(K,N)$-space, $K \in \mathbb{R}, N \in (1,\infty)$. Let $\overline{B_{R_{2}}(x_{0})} \setminus B_{R_{1}}(x_{0}) \subset X$, $\rho(x) := d(x,x_{0})$, $\phi \in C^{2}([R_{1},R_{2}])$, and let $\psi := \phi(\rho)$. Suppose that
    	\begin{equation}
    		u \in C(\overline{B_{R_{2}}(x_{0})} \setminus B_{R_{1}}(x_{0})) \cap W^{1,2}(B_{R_{2}}(x_{0}) \setminus B_{R_{1}}(x_{0})),
    	\end{equation}
        and suppose that $\mathbf{\Delta} \psi$ is a signed Radon measure. Then it holds that
        \begin{equation}
        	\begin{split}
        		\int_{B_{r_{2}}(x_{0}) \setminus B_{r_{1}}(x_{0})} u \mathrm{d} \mathbf{\Delta} \psi = & - \int_{B_{r_{2}}(x_{0}) \setminus B_{r_{1}}(x_{0})} \langle \nabla u, \nabla \psi \rangle \mathrm{d}m\\
        		& + {\phi}'(r_{2}) \left. \frac{\mathrm{d}}{\mathrm{d}s} \right|_{s = r_{2}} \int_{B_{s}(x_{0})} u \mathrm{d}m - {\phi}'(r_{1}) \left. \frac{\mathrm{d}}{\mathrm{d}s} \right|_{s = r_{1}} \int_{B_{s}(x_{0})} u \mathrm{d}m
        	\end{split}
        \end{equation}
        for almost all $R_{1} < r_{1} < r_{2} < R_{2}$.
    \end{lemma}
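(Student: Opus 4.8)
The plan is to test the signed Radon measure $\mathbf{\Delta}\psi$ against a radial regularization of $u$ and pass to the limit. For $R_1 < r_1 < r_2 < R_2$ and small $\varepsilon > 0$, I would introduce the Lipschitz cutoff $\eta_\varepsilon : [0,\infty) \to [0,1]$ vanishing on $[0,r_1] \cup [r_2,\infty)$, equal to $1$ on $[r_1+\varepsilon, r_2-\varepsilon]$, and affine on the two transition intervals, so that $\eta_\varepsilon'(t) = \varepsilon^{-1}\mathbf{1}_{(r_1,r_1+\varepsilon)}(t) - \varepsilon^{-1}\mathbf{1}_{(r_2-\varepsilon,r_2)}(t)$ for a.e.\ $t$. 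Then $\varphi_\varepsilon := \eta_\varepsilon(\rho)\,u$ is supported in the compact set $\{r_1 \le \rho \le r_2\}$, which is compactly contained in the open annulus; since $u$ is continuous (hence bounded, $|u|\le M$) on the compact closure and in $W^{1,2}(B_{R_2}(x_0)\setminus B_{R_1}(x_0))$, while $\eta_\varepsilon(\rho)$ is bounded Lipschitz, the product $\varphi_\varepsilon$ extends by zero to an element of $W^{1,2}(X)$ with compact support. By approximating $\varphi_\varepsilon$ in $W^{1,2}$ by uniformly bounded functions in $\mathrm{LIP}_0(X)$ and using that $\mathbf{\Delta}\psi$ is a genuine Radon measure, the defining identity of Definition \ref{Def_distributional_Laplacian} extends to $\varphi_\varepsilon$, giving
\[
\int_X \varphi_\varepsilon\, \mathrm{d}\mathbf{\Delta}\psi = -\int_X \langle \nabla\psi, \nabla\varphi_\varepsilon\rangle\, \mathrm{d}m.
\]

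Next I would expand the right-hand side with the Leibniz and chain rules. Writing $\nabla\varphi_\varepsilon = \eta_\varepsilon(\rho)\nabla u + u\,\eta_\varepsilon'(\rho)\nabla\rho$ and $\nabla\psi = \phi'(\rho)\nabla\rho$, and invoking the fact that $|\nabla\rho| = 1$ for $m$-a.e.\ $x$ (the distance function has unit minimal weak upper gradient a.e.\ on an $RCD$-space), the integrand becomes $\langle\nabla\psi,\nabla\varphi_\varepsilon\rangle = \eta_\varepsilon(\rho)\langle\nabla\psi,\nabla u\rangle + \phi'(\rho)\,u\,\eta_\varepsilon'(\rho)$. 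Hence the right-hand side splits into a bulk term $-\int_X \eta_\varepsilon(\rho)\langle\nabla\psi,\nabla u\rangle\, \mathrm{d}m$ and a boundary term which, by the explicit form of $\eta_\varepsilon'$, equals
\[
\frac{1}{\varepsilon}\int_{\{r_2-\varepsilon<\rho<r_2\}} \phi'(\rho)\,u\, \mathrm{d}m - \frac{1}{\varepsilon}\int_{\{r_1<\rho<r_1+\varepsilon\}} \phi'(\rho)\,u\, \mathrm{d}m.
\]

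Finally I would send $\varepsilon \to 0$. On the left, $\eta_\varepsilon(\rho)u \to \mathbf{1}_{(r_1,r_2)}(\rho)u$ pointwise off $\{\rho=r_1\}\cup\{\rho=r_2\}$ and is dominated by $M\mathbf{1}_{\{r_1\le\rho\le r_2\}}$, which is $|\mathbf{\Delta}\psi|$-integrable, so dominated convergence yields $\int_{B_{r_2}(x_0)\setminus B_{r_1}(x_0)} u\, \mathrm{d}\mathbf{\Delta}\psi$. The bulk term converges to $-\int_{B_{r_2}(x_0)\setminus B_{r_1}(x_0)}\langle\nabla u,\nabla\psi\rangle\, \mathrm{d}m$ by dominated convergence in $L^1(m)$, both gradients being $L^2$ on the annulus. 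For the boundary term I set $V(s) := \int_{B_s(x_0)} u\, \mathrm{d}m$; since $|u|\le M$ and $s\mapsto m(B_s(x_0))$ is monotone, $V$ has bounded variation and is differentiable at a.e.\ $s$, where $\varepsilon^{-1}(V(r_2)-V(r_2-\varepsilon)) \to V'(r_2)$ and analogously at $r_1$, so that, using continuity of $\phi'$, the boundary term tends to $\phi'(r_2)V'(r_2) - \phi'(r_1)V'(r_1)$. Assembling the three limits produces exactly the asserted identity.

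The restriction to almost all $r_1,r_2$ is forced by two a.e.\ requirements, which I expect to be the only delicate points. First, $|\mathbf{\Delta}\psi|(\{\rho = r_i\}) = 0$ must hold so that the left-hand limit is clean; this is valid for all but countably many radii because the level sets $\{\rho=t\}$ are disjoint and $|\mathbf{\Delta}\psi|$ is finite on compacta. Second, $V$ must be differentiable at $r_i$, again true for a.e.\ radius. The one genuinely technical step is justifying that $\varphi_\varepsilon$, which is merely $W^{1,2}$ rather than Lipschitz (because $u$ is), is an admissible test function against $\mathbf{\Delta}\psi$; this rests on the density approximation described above together with the identity $|\nabla\rho|=1$ a.e., and it is precisely here that the hypothesis that $\mathbf{\Delta}\psi$ be an honest signed Radon measure, not just a distribution, is used.
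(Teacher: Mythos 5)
Your argument is correct, and it is essentially the approach the paper itself takes: the paper does not prove Lemma \ref{Stokes_1} (it is quoted from Chan--Zhang--Zhu), but your scheme --- a piecewise-affine radial cutoff, the Leibniz and chain rules together with $\lvert \nabla \rho \rvert = 1$ $m$-a.e., and Lebesgue differentiation of $s \mapsto \int_{B_{s}(x_{0})} u\, \mathrm{d}m$ at a.e.\ radius --- is exactly the one used for the companion Lemma \ref{Stokes_2}, up to placing the transition layers inside rather than outside $[r_{1},r_{2}]$. The only point genuinely specific to this lemma, which you correctly isolate, is that $\eta_{\varepsilon}(\rho)u$ is merely a compactly supported continuous $W^{1,2}$ function rather than Lipschitz, so one must extend the defining identity of $\mathbf{\Delta}\psi$ to such test functions; since $u$ is continuous this is the standard extension for measure-valued Laplacians and poses no obstruction.
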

    The second one is the following lemma:
    \begin{lemma}\label{Stokes_2}
    	Let $(X,d,m)$ be an $RCD(K,N)$-space, $K \in \mathbb{R}, N \in (1,\infty)$. Let $\overline{B_{R_{2}}(x_{0})} \setminus B_{R_{1}}(x_{0}) \subset X$, $\rho(x) := d(x,x_{0})$, $\phi \in C^{1}([R_{1},R_{2}])$, and let $\psi := \phi(\rho)$. Suppose that
    	\begin{equation}
    		u \in C(\overline{B_{R_{2}}(x_{0})} \setminus B_{R_{1}}(x_{0})) \cap W^{1,2}(B_{R_{2}}(x_{0}) \setminus B_{R_{1}}(x_{0})),
    	\end{equation}
    	and suppose that $\mathbf{\Delta} u$ is a signed Radon measure. Then it holds that
    	\begin{equation}
    		\begin{split}
    			\int_{B_{r_{2}}(x_{0}) \setminus B_{r_{1}}(x_{0})} \psi \mathrm{d} \mathbf{\Delta} u = & - \int_{B_{r_{2}}(x_{0}) \setminus B_{r_{1}}(x_{0})} \langle \nabla u , \nabla \psi \rangle \mathrm{d}m\\
    			& + \phi(r_{2}) \left. \frac{\mathrm{d}}{\mathrm{d}s} \right|_{s = r_{2}} \int_{B_{s}(x_{0})} \langle \nabla u, \nabla \rho \rangle \mathrm{d}m - \phi(r_{1}) \left. \frac{\mathrm{d}}{\mathrm{d}s} \right|_{s = r_{1}} \int_{B_{s}(x_{0})} \langle \nabla u, \nabla \rho \rangle \mathrm{d}m
    		\end{split}
    	\end{equation}
        for almost all $R_{1} < r_{1} < r_{2} < R_{2}$.
    \end{lemma}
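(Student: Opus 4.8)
The plan is to prove the identity by the mechanism behind Lemma \ref{Stokes_1}, but with the roles of the two factors interchanged: here it is $u$ that carries the measure-valued Laplacian while $\psi=\phi(\rho)$ is the radial weight, so rather than testing $\mathbf{\Delta}\psi$ against $u$ I will test $\mathbf{\Delta}u$ against a compactly supported approximation of $\psi$. Since $\psi$ does not vanish on the two bounding spheres, I would first cut it off to a genuine element of $\mathrm{LIP}_{0}$ of the open annulus, apply the definition of the distributional Laplacian together with the Leibniz rule, and then let the cutoff increase to the indicator of the annulus. The gradient of the cutoff will, in the limit, reassemble exactly the two boundary flux terms appearing in the statement; note that only $\phi\in C^{1}$ is needed, consistent with the fact that $\mathbf{\Delta}\psi$ never enters.

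In detail, fix radii $R_{1}<r_{1}<r_{2}<R_{2}$ and, for small $\varepsilon>0$, let $\zeta_{\varepsilon}:[R_{1},R_{2}]\to[0,1]$ be the piecewise-linear function that vanishes for $s\le r_{1}$ and for $s\ge r_{2}$, equals $1$ on $[r_{1}+\varepsilon,r_{2}-\varepsilon]$, and interpolates linearly on the two $\varepsilon$-collars. Set $\eta_{\varepsilon}:=\zeta_{\varepsilon}(\rho)$, a Lipschitz function compactly supported in $B_{R_{2}}(x_{0})\setminus\overline{B_{R_{1}}(x_{0})}$. Then $\psi\eta_{\varepsilon}\in\mathrm{LIP}_{0}$, and Definition \ref{Def_distributional_Laplacian} together with the Leibniz rule for $\langle\nabla\cdot,\nabla\cdot\rangle$ gives
\[
\int_{X}\psi\eta_{\varepsilon}\,\mathrm{d}\mathbf{\Delta}u
=-\int_{X}\eta_{\varepsilon}\langle\nabla u,\nabla\psi\rangle\,\mathrm{d}m
-\int_{X}\psi\langle\nabla u,\nabla\eta_{\varepsilon}\rangle\,\mathrm{d}m.
\]
I would then pass to the limit $\varepsilon\to0$ term by term. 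On the left, $\psi\eta_{\varepsilon}\to\psi\chi_{B_{r_{2}}\setminus B_{r_{1}}}$ boundedly; since the signed Radon measure $\mathbf{\Delta}u$ charges at most countably many spheres $\partial B_{s}(x_{0})$, for almost every pair $r_{1},r_{2}$ the left-hand side tends to $\int_{B_{r_{2}}\setminus B_{r_{1}}}\psi\,\mathrm{d}\mathbf{\Delta}u$. The first term on the right converges to $-\int_{B_{r_{2}}\setminus B_{r_{1}}}\langle\nabla u,\nabla\psi\rangle\,\mathrm{d}m$ by dominated convergence, using $\nabla u\in L^{2}$ and $\lvert\nabla\psi\rvert\le\lVert\phi'\rVert_{\infty}\lvert\nabla\rho\rvert\in L^{\infty}$.

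The heart of the matter, and the step I expect to be the main obstacle, is the last term. The chain rule $\nabla\eta_{\varepsilon}=\zeta_{\varepsilon}'(\rho)\nabla\rho$ and the explicit slopes $\pm1/\varepsilon$ split it into an inner-collar and an outer-collar contribution,
\[
-\frac{1}{\varepsilon}\int_{\{r_{1}<\rho<r_{1}+\varepsilon\}}\psi\langle\nabla u,\nabla\rho\rangle\,\mathrm{d}m
+\frac{1}{\varepsilon}\int_{\{r_{2}-\varepsilon<\rho<r_{2}\}}\psi\langle\nabla u,\nabla\rho\rangle\,\mathrm{d}m.
\]
Writing $F(s):=\int_{B_{s}(x_{0})}\langle\nabla u,\nabla\rho\rangle\,\mathrm{d}m$, each collar average is a difference quotient of $F$. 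Since $\langle\nabla u,\nabla\rho\rangle\in L^{1}$, the function $F$ is a difference of two nondecreasing functions, hence of bounded variation and differentiable at almost every radius, as is $s\mapsto\int_{B_{s}}\lvert\langle\nabla u,\nabla\rho\rangle\rvert\,\mathrm{d}m$. The continuity of $\phi$ then lets me replace $\psi=\phi(\rho)$ by the constant $\phi(r_{i})$ on the shrinking collar, the error being bounded by the modulus of continuity of $\phi$ times the (finite) collar average of $\lvert\langle\nabla u,\nabla\rho\rangle\rvert$. Hence the two collar terms tend to $-\phi(r_{1})F'(r_{1})$ and $+\phi(r_{2})F'(r_{2})$, precisely the boundary terms in the statement. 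Collecting the three limits yields the asserted identity for almost all $R_{1}<r_{1}<r_{2}<R_{2}$, the exceptional radii being those whose sphere carries $\mathbf{\Delta}u$-mass or at which $F$ fails to be differentiable.
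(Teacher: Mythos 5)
Your proposal is correct and follows essentially the same route as the paper's proof: multiply $\psi$ by a radial piecewise-linear cutoff, apply the definition of $\mathbf{\Delta}u$ together with the Leibniz rule, and identify the collar contributions of $\nabla\eta_{\varepsilon}$ as one-sided difference quotients of $F(s)=\int_{B_{s}(x_{0})}\langle\nabla u,\nabla\rho\rangle\,\mathrm{d}m$, with the $\phi(\rho)\to\phi(r_{i})$ replacement error killed by continuity of $\phi$. The only differences are cosmetic (your collars sit inside $[r_{1},r_{2}]$ rather than outside, and you justify a.e.\ differentiability of $F$ via its bounded-variation structure rather than the paper's absolute-continuity estimate), so there is nothing to add.
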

    \begin{proof}
    	Since both $\mathbf{\Delta} u$ and $m$ are Radon measure, we have
    	\begin{equation}
    		\begin{cases}
    			\lim\limits_{j \to \infty} \left| \mathbf{\Delta} u \right|\big(\overline{B_{r + j^{-1}}(x_{0})} \setminus B_{r}(x_{0})\big) = 0,\\
    			\lim\limits_{j \to \infty} m\big(\overline{B_{r + j^{-1}}(x_{0})} \setminus B_{r}(x_{0})\big) = 0,
    		\end{cases}
    	\end{equation}
        for almost all $r \in (R_{1},R_{2})$. On the other hand, we have that
        \begin{equation}
        	\left| \int_{B_{r}(x_{0}) \setminus B_{s}(x_{0})} \langle \nabla u, \nabla \rho \rangle \mathrm{d}m \right| \le \sqrt{m(B_{R_{2}}(x_{0}))} \Big( \int_{B_{r}(x_{0}) \setminus B_{s}(x_{0})} \left| \nabla u \right|^{2} \mathrm{d}m \Big)^{1/2}
        \end{equation}
        Since $u \in W^{1,2}(B_{R_{2}}(x_{0})\setminus B_{R_{1}}(x_{0}))$, we get that the function
        \begin{equation}
        	r \mapsto \int_{B_{r}(x_{0})} \langle \nabla u, \nabla \rho \rangle \mathrm{d}m
        \end{equation}
    is locally absolutely continuous and hence differentiable for almost all $r \in (R_{1},R_{2})$.
    
    Now we fix such $r_{1} < r_{2}$, and, for $j \in \mathbb{N}$, define
    \begin{equation}
    	\eta_{j} (t) := \begin{cases}
    		0, & \text{if } t \in [0,r_{1} - j^{-1}] \cup [r_{2} + j^{-1},R_{2}],\\
    		j(t - r_{1} + j^{-1}), & \text{if } t \in (r_{1} - j^{-1},r_{1}),\\
    		1, & \text{if } t \in [r_{1},r_{2}],\\
    		1 - j(t -r_{2}), & \text{if } t \in (r_{2},r_{2} + j^{-1}).
    	\end{cases}
    \end{equation}
    And, we let $\psi_{j} := \eta_{j}(\rho) \psi \in W^{1,2}_{0}(B_{R_{2}}(x_{0}) \setminus B_{R_{1}}(x_{0}))$.
    
    Then, on the one hand, we have that
    \begin{equation}
    	\begin{split}
    		\int_{B_{R_{2}}(x_{0})} \psi_{j} \mathrm{d}\mathbf{\Delta}u & = \int_{B_{r_{2}}(x_{0}) \setminus B_{r_{1}}(x_{0})} \psi \mathrm{d}\mathbf{\Delta}u\\
    		& + \Big(\int_{B_{R_{2}}(x_{0}) \setminus B_{r_{2}}(x_{0})} + \int_{B_{r_{1}}(x_{0})} \Big) \eta_{j}(\rho) \psi \mathrm{d}\mathbf{\Delta}u.
    	\end{split}
    \end{equation}
    Note that
    \begin{equation}
    	\begin{split}
    		\left| \int_{B_{R_{2}}(x_{0}) \setminus B_{r_{2}}(x_{0})} \eta_{j}(\rho) \psi \mathrm{d}\mathbf{\Delta}u \right| & \le \int_{B_{r_{2} + j^{-1}}(x_{0}) \setminus B_{r_{2}}(x_{0})} \Big(\sup_{[R_{1},R_{2}]} \lvert \phi \rvert \Big) \mathrm{d}\mathbf{\Delta}u\\
    		& \le \Big(\sup_{[R_{1},R_{2}]}  \lvert \phi \rvert \Big) \lvert \mathbf{\Delta} u \rvert(\overline{B_{r_{2} + j^{-1}}(x_{0})} \setminus B_{r_{2}}(x_{0}))\\
    		& \to 0,\quad \text{as } j \to \infty.
    	\end{split}
    \end{equation}
    Similiarly, $\lim\limits_{j \to \infty} \int_{B_{r_{1}}(x_{0})} \eta_{j}(\rho) \psi \mathrm{d}\mathbf{\Delta}u = 0$.
    
    Therefore, we get that
    \begin{equation}\label{step_1}
    	\lim_{j \to \infty} \int_{B_{R_{2}}(x_{0})} \psi_{j} \mathrm{d}\mathbf{\Delta}u = \int_{B_{r_{2}}(x_{0}) \setminus B_{r_{1}}(x_{0})} \psi \mathrm{d}\mathbf{\Delta}u.
    \end{equation}
    On the other hand, we have that
    \begin{equation}
    	\begin{split}
    		\int_{B_{R_{2}}(x_{0})} \psi_{j} \mathrm{d}\mathbf{\Delta}u & = - \int_{B_{R_{2}}(x_{0})} \langle \nabla u, \nabla \psi_{j} \rangle \mathrm{d}m\\
    		& = - \int_{B_{R_{2}}(x_{0})} \langle \nabla u, \eta_{j} \nabla \psi + \psi \nabla \eta_{j} \rangle \mathrm{d}m\\
    		& = - \int_{B_{r_{2}}(x_{0}) \setminus B_{r_{1}}(x_{0})} \langle \nabla u, \nabla \psi \rangle \mathrm{d}m\\
    		& - \Big(\int_{B_{R_{2}(x_{0})} \setminus B_{r_{2}}(x_{0})} + \int_{B_{r_{1}}(x_{0})} \Big) \langle \nabla u, \nabla \psi \rangle \eta_{j} \mathrm{d}m\\
    		& - \int_{B_{R_{2}}(x_{0})} \langle \nabla u, \nabla \eta_{j} \rangle \psi \mathrm{d}m
    	\end{split}
    \end{equation}
    We estimate $\int_{B_{R_{2}(x_{0})} \setminus B_{r_{2}}(x_{0})} \langle \nabla u, \nabla \psi \rangle \eta_{j} \mathrm{d}m$ as follow:
    \begin{equation}
    	\begin{split}
    		\left| \int_{B_{R_{2}(x_{0})} \setminus B_{r_{2}}(x_{0})} \langle \nabla u, \nabla \psi \rangle \eta_{j} \mathrm{d}m \right| & \le \int_{B_{r_{2} + j^{-1}}(x_{0}) \setminus B_{r_{2}}(x_{0})} \lvert \nabla u \rvert \lvert \nabla \psi \rvert \mathrm{d}m\\
    		& \le \Big(\sup_{[R_{1},R_{2}]} \lvert {\phi}' \rvert \Big) \lvert \nabla u \rvert_{L^{2}(B_{r_{2} + j^{-1}}(x_{0}) \setminus B_{r_{2}}(x_{0}))} \Big(m(B_{r^{2}+j^{-1}}(x_{0})\setminus B_{r_{2}}(x_{0}))\Big)^{1/2}\\
    		& \to 0,\quad \text{as } j \to \infty.
    	\end{split}
    \end{equation}
    Similiarly, we have that $\lim\limits_{j \to \infty} \int_{B_{r_{1}}(x_{0})} \langle \nabla u, \nabla \psi \rangle \eta_{j} \mathrm{d}m = 0$.
    
    While for $\int_{B_{R_{2}}(x_{0})} \langle \nabla u, \nabla \eta_{j} \rangle \psi \mathrm{d}m$, we have that
    \begin{equation}
    	\begin{split}
    		\int_{B_{R_{2}}(x_{0})} \langle \nabla u, \nabla \eta_{j} \rangle \psi \mathrm{d}m & = \Big(- \int_{B_{r_{2} + j^{-1}}(x_{0}) \setminus B_{r_{2}}(x_{0})} + \int_{B_{r_{1}}(x_{0}) \setminus B_{r_{1} - j^{-1}}(x_{0})} \Big) \langle \nabla u, j \nabla \rho \rangle \psi \mathrm{d}m\\
    		& = - \int_{B_{r_{2} + j^{-1}}(x_{0}) \setminus B_{r_{2}}(x_{0})} j \langle \nabla u, \nabla \rho \rangle (\phi(r_{2}) + \phi - \phi(r_{2})) \mathrm{d}m\\
    		& + \int_{B_{r_{1}}(x_{0}) \setminus B_{r_{1} - j^{-1}}(x_{0})} j \langle \nabla u, \nabla \rho \rangle (\phi(r_{1}) + \phi - \phi(r_{1})) \mathrm{d}m
    	\end{split}
    \end{equation}
    Since
    \begin{equation}
    	\begin{split}
    		\left| \int_{B_{r_{2} + j^{-1}}(x_{0}) \setminus B_{r_{2}}(x_{0})} j \langle \nabla u, \nabla \rho \rangle (\phi - \phi(r_{2})) \mathrm{d}m \right| & \le \Big(\sup_{[R_{1},R_{2}]} \lvert {\phi}' \rvert\Big) \int_{B_{r_{2} + j^{-1}}(x_{0}) \setminus B_{r_{2}}(x_{0})} \lvert \nabla u \rvert \lvert \nabla \rho \rvert \mathrm{d}m\\
    		& \to 0,\quad \text{as } j \to \infty.
    	\end{split}
    \end{equation}
    Similiarly, $\lim_{j \to \infty} \int_{B_{r_{1}}(x_{0}) \setminus B_{r_{1} - j^{-1}}(x_{0})} j \langle \nabla u, \nabla \rho \rangle \big(\phi - \phi(r_{1})\big) \mathrm{d}m = 0$.
    
    Therefore, we get that 
    \begin{equation}\label{step_2}
    	\begin{split}
    		\lim_{j \to \infty} \int_{B_{R_{2}}(x_{0})} \psi_{j} \mathrm{d}\mathbf{\Delta}u & = - \int_{B_{r_{2}}(x_{0}) \setminus B_{r_{1}}(x_{0})} \langle \nabla u, \nabla \psi \rangle \mathrm{d}m\\
    		& + \phi(r_{2}) \left. \frac{\mathrm{d}}{\mathrm{d}s} \right|_{s = r_{2}} \int_{B_{s}(x_{0})} \langle \nabla u, \nabla \rho \rangle \mathrm{d}m - \phi(r_{1}) \left. \frac{\mathrm{d}}{\mathrm{d}s} \right|_{s = r_{1}} \int_{B_{s}(x_{0})} \langle \nabla u, \nabla \rho \rangle \mathrm{d}m
    	\end{split}
    \end{equation}
    The conclusion follows from \eqref{step_1}\eqref{step_2}.
    \end{proof}

\subsection{Proof of Theorem \ref{main_theorem}}\label{proof_of_main}
    At first, we recall the assumption of Theorem \ref{main_theorem} and fix some notation. Let $N \ge 2$ be an integer and let $(C(\Sigma),d_{C},m_{C})$ be the metric measure cone over an $RCD(N-2,N-1)$ metric measure space $(\Sigma,d_{\Sigma},m_{\Sigma})$ with vertex $p$. Recall that $(C(\Sigma),d_{\Sigma},m_{\Sigma})$ is an $RCD(0,N)$ metric measure space (Theorem \ref{theorem_1_preliminaries}). Let $u_{1},u_{2} : C(\Sigma) \to [0,\infty)$ be continuous, non-negative and $u_{1},u_{2} \in W^{1,2}_{\mathrm{loc}}(C(\Sigma))$. We assume that $u_{1},u_{2}$ satisfy the following conditions:
    \begin{itemize}
    	\item[(1)] $u_{1}(p)=u_{2}(p)=0$;
    	\item[(2)] $u_{1}u_{2}=0$ in $C(\Sigma)$;
    	\item[(3)] $\mathbf{\Delta} u_{i} \ge 0$ in $C(\Sigma)$, $i=1,2$.
    \end{itemize}
    For $r \in (0,\infty)$, we define the quantities
    \begin{equation}
     	\begin{split}
     		& A_{i}(r) := \int_{B_{r}(p)} \frac{\lvert \nabla u_{i} \rvert^{2}}{d(x,p)^{N-2}} \mathrm{d}m_{C},\quad i=1,2,\\
     		& J(r) := \frac{1}{r^{4}} A_{1}(r)A_{2}(r).
     	\end{split}
    \end{equation}
    Note that if $A_{i}(r_{0})=\infty$ for some $r_{0} \in (0,\infty)$, then $A_{i}(r)=\infty$ for all $r \in (r_{0},\infty)$. In this case, $J(r)\equiv \infty$ is trivially monotone increasing in $r \in [r_{0},\infty)$. Therefore, we always assume that $A_{i}(r) < \infty$. Then, by the coarea formular \cite[Remark 4.3]{Miranda_2003}, we have that
    \begin{equation}
        \begin{split}
     		A_{i}(r) & = \int_{0}^{r} \int_{C(\Sigma)} \frac{\lvert \nabla u_{i} \rvert^{2}(\rho,\xi)}{\rho^{N-2}} \mathrm{d}\lvert D \chi_{B_{\rho}(p)} \rvert \mathrm{d}\rho\\
     		& = \int_{0}^{r}\rho^{N-1}\int_{\Sigma} \frac{\lvert \nabla u_{i} \rvert^{2}(\rho,\xi)}{\rho^{N-2}} \mathrm{d}m_{\Sigma}(\xi) \mathrm{d}\rho\\
     		& = \int_{0}^{r} \int_{\Sigma} \rho \lvert \nabla u_{i} \rvert^{2}(\rho,\xi) \mathrm{d}m_{\Sigma}(\xi) \mathrm{d}\rho.
        \end{split}
    \end{equation}
    Therefore, we get that following lemma:
    \begin{lemma}\label{lemma_1_proof_of_main_theorem}
        If $A_{i}(r_{0}) < \infty$ for some $r_{0} >0$, then $A_{i}(r)$ is absolutely continuous in $r \in (0,r_{0})$ and thus differentiable for almost all $r \in (0,r_{0})$. Moreover, at a differentiable point $r \in (0,r_{0})$, it holds that
     	\begin{equation}
     		A_{i}^{\prime}(r) = \int_{\Sigma} r \lvert \nabla u_{i} \rvert^{2}(r,\cdot) \mathrm{d}m_{\Sigma}.
     	\end{equation}
    \end{lemma}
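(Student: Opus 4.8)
The plan is to read off the conclusion directly from the representation derived just above the lemma, namely
\begin{equation*}
A_i(r) = \int_0^r \Big( \int_\Sigma \rho\, \lvert \nabla u_i \rvert^2(\rho,\xi)\, \mathrm{d}m_\Sigma(\xi) \Big)\, \mathrm{d}\rho .
\end{equation*}
Writing $g_i(\rho) := \int_\Sigma \rho\, \lvert \nabla u_i \rvert^2(\rho,\cdot)\, \mathrm{d}m_\Sigma$, this exhibits $A_i$ as the indefinite integral of the single nonnegative function $g_i$ on $(0,r_0)$, so that the statement reduces to a one-variable assertion to which the fundamental theorem of calculus for the Lebesgue integral applies. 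Essentially all of the nonsmooth-geometry input—the coarea formula, the identification of the perimeter of $B_\rho(p)$ with $\rho^{N-1}\mathrm{d}m_\Sigma$, and the cone splitting of $\lvert \nabla u_i \rvert^2$—has already been consumed in producing this representation, after which the argument is purely real analysis in one dimension.

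First I would establish integrability. Since $\lvert \nabla u_i \rvert^2 \ge 0$, the slice integrand $g_i$ is nonnegative and measurable in $\rho$, and by the representation together with the hypothesis $A_i(r_0) < \infty$ one has $\int_0^{r_0} g_i(\rho)\, \mathrm{d}\rho = A_i(r_0) < \infty$; hence $g_i \in L^1((0,r_0))$. (Here $g_i(\rho)$ is defined for a.e.\ $\rho$ via Fubini, which is all that is needed.)

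Next I would invoke the absolute continuity of the Lebesgue integral of an $L^1$ function: given $\varepsilon > 0$ there is $\delta > 0$ such that $\int_E g_i\, \mathrm{d}\rho < \varepsilon$ whenever $E \subset (0,r_0)$ is measurable with $\lvert E \rvert < \delta$. Applying this to finite unions of disjoint subintervals shows that $r \mapsto A_i(r) = \int_0^r g_i$ is absolutely continuous on $(0,r_0)$, hence differentiable almost everywhere. Finally, by the Lebesgue differentiation theorem every Lebesgue point $r$ of $g_i$ is a point of differentiability of $A_i$ with $A_i'(r) = g_i(r) = \int_\Sigma r\, \lvert \nabla u_i \rvert^2(r,\cdot)\, \mathrm{d}m_\Sigma$, and Lebesgue points form a set of full measure in $(0,r_0)$.

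I do not expect a genuine obstacle here. The only point to state with care is the reading of the conclusion: ``a differentiable point $r$'' should be understood as a point at which $A_i$ is differentiable, and the claimed identity $A_i'(r) = \int_\Sigma r\, \lvert \nabla u_i \rvert^2(r,\cdot)\, \mathrm{d}m_\Sigma$ holds at every Lebesgue point of $g_i$, hence for almost every $r \in (0,r_0)$. Thus the substance of the lemma lies entirely in the previously established coarea representation, and the proof is its direct combination with the fundamental theorem of calculus.
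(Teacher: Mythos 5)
Your proposal is correct and follows exactly the route the paper takes: the lemma is stated as a direct consequence of the coarea representation $A_i(r)=\int_0^r\int_\Sigma \rho\,\lvert\nabla u_i\rvert^2(\rho,\xi)\,\mathrm{d}m_\Sigma\,\mathrm{d}\rho$ derived immediately beforehand, and your appeal to absolute continuity of the integral of an $L^1$ function together with the Lebesgue differentiation theorem simply spells out the one-dimensional real analysis that the paper leaves implicit.
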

    A direct corollary is that:
    \begin{corollary}\label{corollary_1_proof_of_main_theorem}
     	Suppose that $A_{i}(r_{0}) < \infty$ for some $r_{0} \in (0,\infty)$, $i=1,2$. Then $J(r)$ is absolutely continuous in $r \in (0,r_{0})$ and thus differentiable for almost all $r \in (0,r_{0})$. Moreover, at a differentiable point $r \in (0,r_{0})$, it holds that
     	\begin{equation}
     		\frac{J^{\prime}(r)}{J(r)} = \frac{A_{1}^{\prime}(r)}{A_{1}(r)} + \frac{A_{2}^{\prime}(r)}{A_{2}(r)} - \frac{4}{r}.
     	\end{equation}
    \end{corollary}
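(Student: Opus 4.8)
The plan is to read off everything from Lemma~\ref{lemma_1_proof_of_main_theorem} together with two elementary facts: that products and reciprocals of absolutely continuous functions stay absolutely continuous on compact subintervals, and that the asserted identity is just logarithmic differentiation of $J = r^{-4} A_{1} A_{2}$.

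First I would fix an arbitrary compact subinterval $[a,b] \subset (0,r_{0})$. By Lemma~\ref{lemma_1_proof_of_main_theorem}, both $A_{1}$ and $A_{2}$ are absolutely continuous on $[a,b]$, and being continuous on a compact interval they are in particular bounded there. A product of bounded absolutely continuous functions is absolutely continuous, so $A_{1}A_{2}$ is absolutely continuous on $[a,b]$. Since $a>0$, the map $r \mapsto r^{-4}$ is $C^{1}$, hence Lipschitz on $[a,b]$, and therefore $J = r^{-4} A_{1} A_{2}$ is again absolutely continuous on $[a,b]$. As $[a,b] \subset (0,r_{0})$ was arbitrary, $J$ is (locally) absolutely continuous on $(0,r_{0})$ and thus differentiable at almost every $r \in (0,r_{0})$.

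Next, I would work at a point $r$ at which $A_{1}$, $A_{2}$ and $J$ are simultaneously differentiable (these coincide up to a null set) and at which $J(r)>0$, equivalently $A_{1}(r),A_{2}(r)>0$. The latter holds precisely outside the subinterval on which one of the $u_{i}$ vanishes identically: indeed $A_{i}$ is nondecreasing, and if $A_{i}(r)=0$ then $\lvert \nabla u_{i}\rvert = 0$ $m_{C}$-a.e.\ on $B_{r}(p)$, which together with continuity and $u_{i}(p)=0$ forces $u_{i}\equiv 0$ there. On the (relatively open) set $\{A_{1}>0\}\cap\{A_{2}>0\}$ I would simply apply the product rule, $J'(r) = -4 r^{-5} A_{1}(r) A_{2}(r) + r^{-4} A_{1}'(r) A_{2}(r) + r^{-4} A_{1}(r) A_{2}'(r)$, and divide by $J(r) = r^{-4} A_{1}(r) A_{2}(r)$ to obtain the stated identity; equivalently, one differentiates $\log J = -4\log r + \log A_{1} + \log A_{2}$.

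There is no genuine obstacle here. The only two points needing a word of care are that absolute continuity is claimed only on compact subintervals, so that neither the blow-up of $r^{-4}$ at the origin nor the behaviour of the $A_{i}$ as $r \to 0^{+}$ ever enters, and that the quotient $J'/J$ is read only where $J(r)>0$ so that it is meaningful. Both are dispatched by restricting attention to $[a,b]\subset(0,r_{0})$ and to the set $\{A_{1}>0\}\cap\{A_{2}>0\}$, so the corollary follows directly from Lemma~\ref{lemma_1_proof_of_main_theorem}.
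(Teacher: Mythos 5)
Your proposal is correct and follows exactly the route the paper intends: the paper states this as ``a direct corollary'' of Lemma~\ref{lemma_1_proof_of_main_theorem} without writing out a proof, and your argument (local absolute continuity of products of bounded AC functions with the Lipschitz factor $r^{-4}$, then logarithmic differentiation where $J(r)>0$) is precisely the omitted computation. Your extra care about restricting to compact subintervals of $(0,r_0)$ and to the set where $A_1,A_2>0$ is a welcome clarification, since the stated identity is only meaningful there.
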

    Now, we define
    \begin{equation}
    	\Gamma_{i}(r) := \{\xi \in \Sigma : u_{i}(r,\xi) > 0\},\quad \forall r \in (0,\infty),i=1,2.
    \end{equation}
    Then, since $u_{1},u_{2}$ are continuous and $u_{1}u_{2}=0$ in $C(\Sigma)$, we get that $\Gamma_{1}(r),\Gamma_{2}(r)$ are disjoint open subsets of $\Sigma$ for all $r \in (0,\infty)$. The next step is to estimate $A_{i}(r)$ and $A_{i}^{\prime}(r)$.
    \begin{lemma}\label{lemma_2_proof_of_main_theorem}
    	For almost all $r \in (0,\infty)$, it holds that
    	\begin{equation}
    		A_{i}^{\prime}(r) \ge \int_{\Gamma_{i}(r)} r\lvert \nabla_{\mathbb{R}^{+}} u_{i}(\cdot,\xi) \rvert^{2} (r) + \lambda(\Gamma_{i}(r))r^{-1} \lvert u_{i}(r,\xi) \rvert^{2} \mathrm{d}m_{\Sigma}.
    	\end{equation}
    	Recall that $\lambda(\Gamma_{i}(r))$ is the first eigenvalue of $\Gamma_{i}(r)$.
    \end{lemma}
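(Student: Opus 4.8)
The plan is to start from the identity
\[
A_i'(r) = \int_{\Sigma} r\,\lvert \nabla u_i \rvert^{2}(r,\cdot)\, \mathrm{d}m_{\Sigma}
\]
furnished by Lemma \ref{lemma_1_proof_of_main_theorem}, and then to split the full gradient on the cone into its radial and spherical components. Using the pointwise decomposition $\lvert \nabla u_i \rvert^{2}(r,\xi) = \lvert \nabla_{\mathbb{R}^{+}} u_i(\cdot,\xi) \rvert^{2}(r) + r^{-2}\lvert \nabla_{\Sigma} u_i(r,\cdot) \rvert^{2}(\xi)$ recorded in the preliminaries (following \cite[Proposition 3.4]{Ketterer_2015}), I would write, for a.e.\ $r$,
\[
A_i'(r) = \int_{\Sigma} r\,\lvert \nabla_{\mathbb{R}^{+}} u_i(\cdot,\xi) \rvert^{2}(r)\, \mathrm{d}m_{\Sigma} + \int_{\Sigma} r^{-1}\,\lvert \nabla_{\Sigma} u_i(r,\cdot) \rvert^{2}(\xi)\, \mathrm{d}m_{\Sigma}.
\]
Both integrands are nonnegative, so the radial integral over all of $\Sigma$ dominates its restriction to $\Gamma_i(r)$; this accounts for the first term of the claimed inequality, and the remaining task is to bound the spherical integral below by the eigenvalue term.

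Second, I would fix a value of $r$ at which the slice $u_i(r,\cdot)$ lies in $W^{1,2}(\Sigma)$: by Fubini applied to the decomposition formula together with $u_i \in W^{1,2}_{\mathrm{loc}}(C(\Sigma))$, this holds for a.e.\ $r$. Since $u_i \ge 0$ and $u_i$ is continuous, this slice is a nonnegative $W^{1,2}(\Sigma)$ function vanishing on $\Sigma \setminus \Gamma_i(r)$. By the locality of the minimal weak upper gradient on level sets, $\lvert \nabla_{\Sigma} u_i(r,\cdot) \rvert = 0$ for $m_{\Sigma}$-a.e.\ point of $\{u_i(r,\cdot)=0\} = \Sigma \setminus \Gamma_i(r)$, so the spherical integral is in fact concentrated on $\Gamma_i(r)$:
\[
\int_{\Sigma} r^{-1}\,\lvert \nabla_{\Sigma} u_i(r,\cdot) \rvert^{2}\, \mathrm{d}m_{\Sigma} = \int_{\Gamma_i(r)} r^{-1}\,\lvert \nabla_{\Sigma} u_i(r,\cdot) \rvert^{2}\, \mathrm{d}m_{\Sigma}.
\]

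Third, I would invoke the variational characterization of $\lambda(\Gamma_i(r))$. Its definition takes the infimum of the Rayleigh quotient over $\mathrm{LIP}_{0}(\Gamma_i(r))$, so to apply it to $u_i(r,\cdot)$ I must first verify that this slice lies in the $W^{1,2}$-closure of $\mathrm{LIP}_{0}(\Gamma_i(r))$. Here continuity and nonnegativity are used essentially: the truncations $(u_i(r,\cdot)-\varepsilon)^{+}$ have support compactly contained in the open set $\Gamma_i(r)$ and converge to $u_i(r,\cdot)$ in $W^{1,2}(\Sigma)$ as $\varepsilon \to 0^{+}$, and each is approximable in $W^{1,2}$ by functions of $\mathrm{LIP}_{0}(\Gamma_i(r))$; hence the Rayleigh inequality extends to the slice, giving
\[
\int_{\Gamma_i(r)} \lvert \nabla_{\Sigma} u_i(r,\cdot) \rvert^{2}\, \mathrm{d}m_{\Sigma} \ge \lambda(\Gamma_i(r)) \int_{\Gamma_i(r)} \lvert u_i(r,\cdot) \rvert^{2}\, \mathrm{d}m_{\Sigma}.
\]
Feeding this back, together with the restriction of the radial term to $\Gamma_i(r)$, yields the stated lower bound for $A_i'(r)$.

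I expect the main obstacle to lie in the second and third steps rather than in the bookkeeping: one must know that the slices $u_i(r,\cdot)$ are genuine $W^{1,2}(\Sigma)$ functions for a.e.\ $r$ carrying the expected spherical gradient, that locality annihilates that gradient off $\Gamma_i(r)$, and that $u_i(r,\cdot)$ can be approximated by compactly supported Lipschitz competitors so that the eigenvalue inequality (stated only for $\mathrm{LIP}_{0}$ functions) genuinely applies. The truncation-plus-density argument should dispatch the last point, but it leans decisively on the continuity of $u_i$ and on the nonnegativity hypothesis.
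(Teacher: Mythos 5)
Your proposal is correct and follows essentially the same route as the paper, whose proof consists precisely of the radial--spherical gradient decomposition followed by an appeal to the definition of $\lambda(\Gamma_i(r))$. The only difference is that you spell out the details the paper leaves implicit (that the slices lie in $W^{1,2}(\Sigma)$ for a.e.\ $r$, locality of the gradient on the zero set, and the truncation-plus-density argument needed to apply the $\mathrm{LIP}_0$-Rayleigh quotient to $u_i(r,\cdot)$), which is a worthwhile elaboration rather than a departure.
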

    \begin{proof}
    	Recall that
    	\begin{equation}
    		\lvert \nabla u_{i} \rvert^{2}(r,\xi) = \lvert \nabla_{\mathbb{R}^{+}} u_{i} (\cdot,\xi)\rvert^{2} + r^{-2} \lvert \nabla_{\Sigma} u_{i}(r,\cdot) \rvert^{2}(\xi).
    	\end{equation}
    	The conclusion follows from the definition of $\lambda(\Gamma_{i}(r))$.
    \end{proof}
    \begin{lemma}\label{lemma_3_proof_of_main_theorem}
    	$\mathbf{\Delta} d(x,p)^{2-N} \equiv 0$ in $C(\Sigma) \setminus \{p\}$ as a Radon measure.
    \end{lemma}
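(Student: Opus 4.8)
The plan is to verify the defining identity of the distributional Laplacian directly. Writing $\psi(x) := d(x,p)^{2-N} = r^{2-N}$ for $x = (r,\xi)$, I would fix an arbitrary test function $\phi \in \mathrm{LIP}_0(C(\Sigma)\setminus\{p\})$ and show that $\int_{C(\Sigma)}\langle\nabla\psi,\nabla\phi\rangle\,\mathrm{d}m_C = 0$; by Definition \ref{Def_distributional_Laplacian} this is exactly the assertion that $\mathbf{\Delta}\psi \equiv 0$ as a Radon measure on $C(\Sigma)\setminus\{p\}$. First I would localize: since $\phi$ has compact support in the open set $C(\Sigma)\setminus\{p\}$, there exist $0 < R_1 < R_2 < \infty$ with $\mathrm{supp}\,\phi \subset \overline{B_{R_2}(p)}\setminus B_{R_1}(p)$. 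On this annulus $r$ is bounded away from $0$ and from $\infty$, so $\psi = \phi_0(r)$ with $\phi_0(t) = t^{2-N}$ smooth there; in particular $\psi$ is Lipschitz on the annulus and hence lies in $W^{1,2}$ of it, which makes $\langle\nabla\psi,\nabla\phi\rangle$ well-defined and legitimizes the use of the gradient splitting formula recorded in the preliminaries.

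The second step is the pointwise computation of the gradient pairing. Since $\psi$ depends only on $r$, its angular gradient $\nabla_\Sigma\psi$ vanishes, so the splitting identity $\langle\nabla\psi,\nabla\phi\rangle(r,\xi) = \langle\nabla_{\mathbb{R}^+}\psi,\nabla_{\mathbb{R}^+}\phi\rangle(r,\xi) + r^{-2}\langle\nabla_\Sigma\psi,\nabla_\Sigma\phi\rangle(r,\xi)$ collapses to $\langle\nabla\psi,\nabla\phi\rangle(r,\xi) = \phi_0'(r)\,\partial_r\phi(r,\xi) = (2-N)\,r^{1-N}\,\partial_r\phi(r,\xi)$ for $m_C$-a.e. $(r,\xi)$.

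The third step uses the explicit form $m_C = r^{N-1}\,\mathrm{d}r\otimes m_\Sigma$ together with Fubini. Integrating the previous display, the weight $r^{N-1}$ of the cone measure cancels the factor $r^{1-N}$ exactly — this cancellation is the conical avatar of the classical fact that $t^{2-N}$ is the fundamental solution of the radial operator $\partial_t^2 + \tfrac{N-1}{t}\partial_t$ in dimension $N$ — leaving $\int_{C(\Sigma)}\langle\nabla\psi,\nabla\phi\rangle\,\mathrm{d}m_C = (2-N)\int_\Sigma\Big(\int_0^\infty \partial_r\phi(r,\xi)\,\mathrm{d}r\Big)\,\mathrm{d}m_\Sigma(\xi)$. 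For $m_\Sigma$-a.e. $\xi$ the slice $r\mapsto\phi(r,\xi)$ is Lipschitz with compact support in $(0,\infty)$, so the inner integral vanishes by the fundamental theorem of calculus, and the whole expression is $0$.

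I expect the only genuine obstacle to be the rigorous justification of the second and third steps: one must confirm that $\psi$ indeed belongs to $W^{1,2}$ of the annulus so that the splitting formula (stated for Sobolev functions on the whole cone) applies after an inessential cutoff, and that the radial derivative $\partial_r\phi$ is measurable and integrable enough to invoke Fubini and to identify it with the $\mathbb{R}^+$-component of $\nabla\phi$ entering the splitting. Everything else reduces to the elementary cancellation of powers of $r$ and the fundamental theorem of calculus along each radial ray.
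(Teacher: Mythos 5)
Your proposal is correct and follows essentially the same route as the paper: both reduce $\langle\nabla d(\cdot,p)^{2-N},\nabla\phi\rangle$ to $(2-N)r^{1-N}\partial_r\phi$ via the gradient splitting on the cone (the paper phrases this through the chain rule for $f=d(\cdot,p)$ with $\nabla_\Sigma f\equiv 0$, $\nabla_{\mathbb{R}^+}f\equiv 1$), cancel the $r^{1-N}$ against the $r^{N-1}$ weight of $m_C$, and conclude by the fundamental theorem of calculus along radial rays since $\phi$ is compactly supported away from the vertex. Your explicit localization to an annulus is a minor extra care that the paper leaves implicit; otherwise the arguments coincide.
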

    \begin{proof}
    	Let $f(r,\theta) = f(x):=d(x,p)$. For each $\phi \in \mathrm{LIP}_{0}(C(\Sigma)\setminus\{p\})$, it holds that
    	\begin{equation}
    		\begin{split}
    			&\int_{C(\Sigma)} \langle \nabla \phi ,\nabla f^{2-N} \rangle \mathrm{d}m_{C}\\
    			= & \int_{C(\Sigma)} (2-N)f^{1-N} \langle \nabla \phi ,\nabla f \rangle \mathrm{d}m_{C}\\
    			= & \int_{0}^{\infty} \int_{\Sigma} (2-N)r^{1-N} \Big( \langle \nabla_{\mathbb{R}^{+}} \phi, \nabla_{\mathbb{R}^{+}} f \rangle + r^{-2}\langle \nabla_{\Sigma} \phi, \nabla_{\Sigma} f \rangle \Big) r^{N-1} \mathrm{d}m_{\Sigma}\mathrm{d}r.
    		\end{split}
    	\end{equation}
    	Note that $f (r,\cdot) \equiv r$ on $\{r\} \times \Sigma$. Thus $\nabla_{\Sigma} f \equiv 0, \nabla_{\mathbb{R}^{+}} f \equiv 1$ and therefore
    	\begin{equation}
    		\begin{split}
    			\int_{C(\Sigma)} \langle \nabla \phi, \nabla f^{2-N}\rangle \mathrm{d}m_{C} =& (2-N) \int_{0}^{\infty} \int_{\Sigma} \frac{\mathrm{d}}{\mathrm{d}r} \phi \mathrm{d}m_{\Sigma}\mathrm{d}r\\
    			= & (2-N)\int_{\Sigma} \Big(\lim_{r \to \infty} \phi (r,\theta) - \lim_{r\to 0} \phi(r,\theta) \Big) \mathrm{d}m_{\Sigma}\\
    			= & 0,
    		\end{split}
    	\end{equation}
    	i.e. $\mathbf{\Delta} d(x,p)^{2-N} = 0$ in $C(\Sigma) \setminus \{p\}$ as a Radon measure.
    \end{proof}
    \begin{lemma}\label{lemma_4_proof_of_main_theorem}
    	For almost all $r \in (0,\infty)$, it holds that
    	\begin{equation} 
    		A_{i}(r) \le r \int_{\Gamma_{i}(r)} u_{i}(r,\xi) \lvert \nabla_{\mathbb{R}^{+}} u_{i}(\cdot,\xi) \rvert(r) + \frac{N-2}{2}r^{-1}u_{i}^{2}(r,\xi) \mathrm{d}m_{\Sigma}.
    	\end{equation}
    \end{lemma}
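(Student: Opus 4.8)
The plan is to use subharmonicity to discard an unwanted term and then convert the remaining integral into boundary data via the Stokes formula of Lemma~\ref{Stokes_2}, exploiting that the radial weight is harmonic by Lemma~\ref{lemma_3_proof_of_main_theorem}. Write $\rho(x):=d(x,p)$ and $\psi:=\rho^{2-N}=\phi(\rho)$ with $\phi(t)=t^{2-N}$. By the chain rule for the measure-valued Laplacian, on any annulus (where $u_i$ is bounded and $W^{1,2}$, so $\mathbf{\Delta}(u_i^2/2)$ is a signed Radon measure) one has $\mathbf{\Delta}(u_i^2/2)=\lvert\nabla u_i\rvert^2 m_C+u_i\,\mathbf{\Delta}u_i$. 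Since $\psi>0$, $u_i\ge 0$ and $\mathbf{\Delta}u_i\ge 0$, the term $\int\psi\,u_i\,\mathrm{d}\mathbf{\Delta}u_i$ is nonnegative, so that for $0<r_1<r_2$
\begin{equation*}
A_i(r_2)-A_i(r_1)=\int_{B_{r_2}(p)\setminus B_{r_1}(p)}\psi\,\lvert\nabla u_i\rvert^2\,\mathrm{d}m_C\le \int_{B_{r_2}(p)\setminus B_{r_1}(p)}\psi\,\mathrm{d}\mathbf{\Delta}\Big(\tfrac{u_i^2}{2}\Big).
\end{equation*}

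Next I would apply Lemma~\ref{Stokes_2} with $u$ replaced by $u_i^2/2$ and the radial weight $\psi=\phi(\rho)$, which is $C^1$ on $[r_1,r_2]\subset(0,\infty)$ and satisfies $\mathbf{\Delta}\psi\equiv 0$ there. Using the cone splitting $\langle\nabla(u_i^2/2),\nabla\rho\rangle=u_i\langle\nabla u_i,\nabla\rho\rangle$ (with $\langle\nabla u_i,\nabla\rho\rangle$ the radial derivative, as $\nabla_\Sigma\rho\equiv 0$) and $\nabla\psi=(2-N)\rho^{1-N}\nabla\rho$, I evaluate the three terms in cone coordinates $\mathrm{d}m_C=\rho^{N-1}\mathrm{d}\rho\,\mathrm{d}m_\Sigma$. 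The interior term becomes $(N-2)\int_{r_1}^{r_2}\int_\Sigma u_i\langle\nabla u_i,\nabla\rho\rangle\,\mathrm{d}m_\Sigma\,\mathrm{d}\rho$, which telescopes by the fundamental theorem of calculus into $\tfrac{N-2}{2}\int_\Sigma u_i^2\,\mathrm{d}m_\Sigma$ at the two endpoints; each flux term $\phi(r_j)\frac{\mathrm{d}}{\mathrm{d}s}\big|_{s=r_j}\int_{B_s}\langle\nabla(u_i^2/2),\nabla\rho\rangle\,\mathrm{d}m_C$ reduces, via the coarea formula, to $r_j\int_\Sigma u_i\langle\nabla u_i,\nabla\rho\rangle\,\mathrm{d}m_\Sigma$. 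Collecting terms yields, for a.e.\ $r_1<r_2$, the clean identity $\int_{B_{r_2}\setminus B_{r_1}}\psi\,\mathrm{d}\mathbf{\Delta}(u_i^2/2)=B(r_2)-B(r_1)$, where $B(r):=r\int_\Sigma u_i\langle\nabla u_i,\nabla\rho\rangle\,\mathrm{d}m_\Sigma+\tfrac{N-2}{2}\int_\Sigma u_i^2\,\mathrm{d}m_\Sigma$.

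I would then send $r_1\to 0$. The left side converges to $A_i(r_2)$ since $A_i(r_1)\to 0$ by the absolute continuity of Lemma~\ref{lemma_1_proof_of_main_theorem}. For the boundary term, $\int_\Sigma u_i^2(r_1,\cdot)\,\mathrm{d}m_\Sigma\to 0$ because $u_i(r_1,\xi)\to u_i(p)=0$ by continuity, while the flux part is dominated by $r_1\big(\int_\Sigma u_i^2\big)^{1/2}\big(\int_\Sigma\lvert\nabla_{\mathbb{R}^+}u_i\rvert^2\big)^{1/2}=\big(\int_\Sigma u_i^2\big)^{1/2}\big(r_1^2\int_\Sigma\lvert\nabla_{\mathbb{R}^+}u_i\rvert^2\big)^{1/2}$; since $\int_0^{r_0}\rho\int_\Sigma\lvert\nabla_{\mathbb{R}^+}u_i\rvert^2\,\mathrm{d}m_\Sigma\,\mathrm{d}\rho\le A_i(r_0)<\infty$, this tends to $0$ along a suitable sequence $r_1\to 0$, so $B(r_1)\to 0$ and hence $A_i(r)\le B(r)$ for a.e.\ $r$. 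Finally, as both integrands vanish wherever $u_i(r,\cdot)=0$, the integrals in $B(r)$ restrict to $\Gamma_i(r)$, and using $\langle\nabla u_i,\nabla\rho\rangle=\langle\nabla_{\mathbb{R}^+}u_i(\cdot,\xi),\nabla_{\mathbb{R}^+}\rho\rangle(r)\le\lvert\nabla_{\mathbb{R}^+}u_i(\cdot,\xi)\rvert(r)$ (valid since $u_i\ge 0$, $r>0$) gives precisely the asserted inequality.

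The main obstacle is the passage $r_1\to 0$: the inner flux term must be killed using only $A_i(r_0)<\infty$ and the continuity of $u_i$ at $p$, which forces a subsequence rather than a plain limit. A secondary technical point is checking that $\mathbf{\Delta}(u_i^2/2)$ is a genuine signed Radon measure on each annulus so that Lemma~\ref{Stokes_2} legitimately applies, together with the a.e.-in-$r$ identification of the coarea derivatives appearing in that formula with the angular integrals $r^{N-1}\int_\Sigma u_i\langle\nabla u_i,\nabla\rho\rangle\,\mathrm{d}m_\Sigma$.
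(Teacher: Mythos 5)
Your proposal is correct and follows essentially the same route as the paper: subharmonicity gives $A_i(r)\le\int \rho^{2-N}\,\mathrm{d}\mathbf{\Delta}(u_i^2/2)$, the annulus Stokes formula with the weight $\rho^{2-N}$ produces the boundary terms, and the limit $r_1\to 0$ is handled exactly as in the paper (continuity at $p$ for the $\int_\Sigma u_i^2$ term, and a subsequence extracted from the finiteness of $A_i$ for the flux term). The only cosmetic difference is that you evaluate $-\int\langle\nabla(u_i^2/2),\nabla\rho^{2-N}\rangle\,\mathrm{d}m_C$ directly in cone coordinates via the fundamental theorem of calculus, whereas the paper packages the same computation as Lemma~\ref{Stokes_1} combined with the harmonicity of $\rho^{2-N}$ (Lemma~\ref{lemma_3_proof_of_main_theorem}); the two are equivalent.
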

    \begin{proof}
    	Since $u_{i}\mathbf{\Delta} u_{i} \ge 0$ in $C(\Sigma)$, we get that
    	\begin{equation}
    		\mathbf{\Delta} \frac{u_{i}^{2}}{2} = \frac{1}{2}u_{i}\mathbf{\Delta} u_{i} + \lvert \nabla u_{i} \rvert^{2} \ge \lvert u_{i} \rvert^{2},\quad \text{in } C(\Sigma).
    	\end{equation}
    	Therefore, we get that
    	\begin{equation}
    		A_{i}(r) \le \int_{B_{r}(p)} \frac{1}{d(x,p)^{N-2}} \mathrm{d}\mathbf{\Delta} \frac{u_{i}^{2}}{2}.
    	\end{equation}
    	By Lemma \ref{Stokes_1} and Lemma \ref{Stokes_2}, we have that, for almost all $0<r_{1}<r_{2}<r$,
    	\begin{equation}\label{equation_1_lemma_4_proof_of_main_theorem}
    		\begin{split}
    			& \int_{B_{r_{2}}(p)\setminus B_{r_{1}}(p)} \frac{u_{i}^{2}}{2} \mathrm{d} \mathbf{\Delta} d(x,p)^{2-N} - \frac{2-N}{2} \int_{\Sigma} u_{i}^{2}(r_{2},\cdot) \mathrm{d}m_{\Sigma} + \frac{2-N}{2} \int_{\Sigma} u_{i}^{2}(r_{1},\cdot) \mathrm{d}m_{\Sigma}\\
    			= & - \int_{B_{r_{2}}(p)\setminus B_{r_{1}}(p)} \langle \nabla \frac{u_{i}^{2}}{2},\nabla d(x,p)^{2-N} \rangle \mathrm{d}m_{C}\\
    			= & \int_{B_{r_{2}}(p) \setminus B_{r_{1}}(p)} d(x,p)^{2-N} \mathrm{d}\mathbf{\Delta} \frac{u_{i}^{2}}{2} -r_{2} \int_{\Sigma} u_{i}(r_{2},\cdot) \nabla_{\mathbb{R}^{+}} u_{i}(r_{2},\cdot) \mathrm{d}m_{\Sigma} + r_{1} \int_{\Sigma} u_{i}(r_{1},\cdot) \nabla_{\mathbb{R}^{+}} u_{i}(r_{1},\cdot) \mathrm{d}m_{\Sigma}.
    		\end{split}
    	\end{equation}
    	Since $\lim\limits_{x \to p} u_{i}(x) = u_{i}(p)=0$, we have that
    	\begin{equation}\label{equation_2_lemma_4_proof_of_main_theorem}
    		\lim\limits_{r_{1} \to 0} \int_{\Sigma} u_{i}^{2}(r_{1},\cdot) \mathrm{d}m_{\Sigma} = 0.
    	\end{equation}
    	Recall that we always assume that $A_{i}(r) < \infty$. Thus, we have that
    	\begin{equation}
    		A_{i}(r) = \int_{0}^{r} \int_{\Sigma} \rho \lvert \nabla u_{i} \rvert^{2}(\rho,\xi) \mathrm{d}m_{\Sigma}(\xi) \mathrm{d}\rho \to 0,\quad \text{as } r \to 0.
    	\end{equation}
    	On the other hand, for each $r > 0$, the set
    	\begin{equation}
    		\{\rho \in (\frac{r}{2},r) : \int_{\Sigma} \rho \lvert \nabla u_{i} \rvert^{2}(\rho,\cdot) \mathrm{d}m_{\Sigma} \le \frac{4A_{i}(r)}{\rho}\}
    	\end{equation}
    	has positive $\mathcal{L}^{1}$ measure. In particular, there exists a sequence of $r_{1} \to 0$ with \eqref{equation_1_lemma_4_proof_of_main_theorem} holds such that
    	\begin{equation}\label{equation_3_lemma_4_proof_of_main_theorem}
    		\begin{split}
    			& r_{1} \int_{\Sigma} u_{i}(r_{1},\cdot) \lvert \nabla_{\mathbb{R}^{+}}u_{i}\rvert(r_{1},\cdot) \mathrm{d}m_{\Sigma}\\
    			\le & \lvert u_{i}(r_{1},\cdot) \rvert_{L^{2}(\Sigma,m_{\Sigma})} \Big(\int_{\Sigma} r_{1}^{2} \lvert \nabla u_{i} \rvert^{2}(r_{1},\cdot) \mathrm{d}m_{\Sigma}\Big)^{\frac{1}{2}},\\
    			\to& 0,\quad \text{as } r_{1} \to 0.
    		\end{split}
    	\end{equation}
    	Finally, by Lemma \ref{lemma_3_proof_of_main_theorem}, we have that
    	\begin{equation}\label{equation_4_lemma_4_proof_of_main_theorem}
    		\int_{B_{r_{2}}(p)\setminus B_{r_{1}}(p)} \frac{u_{i}^{2}}{2} \mathrm{d}\mathbf{\Delta} d(x,p)^{2-N} = 0.
    	\end{equation}
    	By \eqref{equation_1_lemma_4_proof_of_main_theorem}, \eqref{equation_2_lemma_4_proof_of_main_theorem}, \eqref{equation_3_lemma_4_proof_of_main_theorem},\eqref{equation_4_lemma_4_proof_of_main_theorem}, we conclude that, for almost all $r > 0$, it holds that
    	\begin{equation}
    		\begin{split}
    			A_{i}(r) & \le \int_{B_{r}(p)} \frac{1}{d(x,p)^{N-2}} \mathrm{d}\mathbf{\Delta} \frac{u_{i}^{2}}{2}\\
    			& = \int_{\Sigma} ru_{i}(r,\cdot) \nabla_{\mathbb{R}^{+}} u_{i}(r,\cdot) + \frac{N-2}{2} u_{i}^{2}(r,\cdot) \mathrm{d}m_{\Sigma}\\
    			& = \int_{\Gamma_{i}(r)} ru_{i}(r,\cdot) \nabla_{\mathbb{R}^{+}} u_{i}(r,\cdot) + \frac{N-2}{2} u_{i}^{2}(r,\cdot) \mathrm{d}m_{\Sigma}.
    		\end{split}
    	\end{equation}
    \end{proof}
    \begin{proof}[Proof of Theorem \ref{main_theorem}]
    	For each $r$, we choose $\beta_{i}(r) \in (0,1]$ be such that
    	\begin{equation}
    		\frac{N-2}{2} = \frac{(1-\beta_{i}(r))\lambda(\Gamma_{i}(r))}{2\sqrt{\beta_{i}(r)\lambda(\Gamma_{i}(r))}}.
    	\end{equation}
    	We let $\alpha_{i}(r) := \sqrt{\beta_{i}(r)\lambda(\Gamma_{i}(r))}$. Then $\alpha_{i}(r)$ is the positive solution to the following equation:
    	\begin{equation}
    		\lambda(\Gamma_{i}(r)) = \alpha_{i}(r)\big(N-2+\alpha_{i}(r)\big).
    	\end{equation}
    	Let $\overline{\Gamma}_{i}(r)$ be a geodesic ball in the sphere $\partial B_{1} \subset \mathbb{R}^{N}$ such that
    	\begin{equation}
    		\frac{m_{\Sigma}(\Gamma_{i}(r))}{m_{\Sigma}(\Sigma)} = \frac{\mathrm{Vol}(\overline{\Gamma}_{i}(r))}{\mathrm{Vol}(\partial B_{1})}.
    	\end{equation}
    	Then, by Theorem \ref{Faber_Krahn}, we get that $\lambda (\Gamma_{i}(r)) \ge \lambda (\overline{\Gamma}_{i}(r))$ and therefore
    	\begin{equation}
    		\alpha_{i}(r) \ge \alpha (\overline{\Gamma}_{i}(r)).
    	\end{equation}
    	By Lemma \ref{Friedland-Haymen_inequality}, we get that
    	\begin{equation}
    		\alpha_{1}(r) + \alpha_{2}(r) \ge \alpha(\overline{\Gamma}_{1}(r))+\alpha(\overline{\Gamma}_{2}(r)) \ge 2.
    	\end{equation}
    	By Lemma \ref{lemma_2_proof_of_main_theorem} and Lemma \ref{lemma_4_proof_of_main_theorem}, we get that
    	\begin{equation}
    		\begin{split}
    			A_{i}^{\prime}(r) & \ge \int_{\Gamma_{i}(r)} 2\sqrt{\beta_{i}(r)\lambda(\Gamma_{i}(r))} u_{i}(r,\xi) \lvert \nabla_{\mathbb{R}^{+}} u_{i} (\cdot,\xi)\rvert(r) + (1-\beta_{i}(r)) \lambda(\Gamma_{i}(r)) r^{-1} \lvert u_{i}(r,\xi) \rvert^{2} \mathrm{d}m_{\Sigma}\\
    			& = 2 \alpha_{i}(r) \int_{\Gamma_{i}(r)} u_{i}(r,\xi) \lvert \nabla_{\mathbb{R}^{+}} u(\cdot,\xi) \rvert(r) + \frac{N-2}{2}r^{-1} \lvert u_{i}(r,\xi)\rvert^{2} \mathrm{d}m_{\Sigma}\\
    			& \ge \frac{2\alpha_{i}(r)}{r}A_{i}(r).
    		\end{split}
    	\end{equation}
    	Finally, by Corollary \ref{corollary_1_proof_of_main_theorem}, we get that
    	\begin{equation}
    		\begin{split}
    			\frac{J^{\prime}(r)}{J(r)} & \ge \frac{A_{1}^{\prime}(r)}{A_{1}(r)} + \frac{A_{2}^{\prime}(r)}{A_{2}(r)} - \frac{4}{r}\\
    			& \ge \frac{2\big(\alpha_{1}(r)+\alpha_{2}(r)\big)-4}{r} \ge 0.
    		\end{split}
    	\end{equation}
    \end{proof}
\section{Proof of Theorem \ref{rigidity_theorem}}
\begin{proof}[Proof of Theorem \ref{rigidity_theorem}]
	If $0<J(r_{1})=J(r_{2})<\infty$, then all inequality in the proof of Theorem \ref{main_theorem} must be equality. At first, for almost all $r \in (r_{1},r_{2})$, it holds that
	\begin{equation}
		\begin{split}
			& \lambda(\Gamma_{i}(r))=\lambda(\overline{\Gamma}_{i}(r))\\
			& \alpha_{1}(r)+\alpha_{2}(r) = \alpha(\overline{\Gamma}_{1}(r)) + \alpha (\overline{\Gamma}_{2}(r)) = 2,
		\end{split}
	\end{equation}
	where $\alpha_{i}(r),\Gamma_{i}(r)$ and $\overline{\Gamma}_{i}(r)$ are as in the proof of Theorem \ref{main_theorem}. By Theorem \ref{Faber_Krahn}, the first equality above implies that $(\Sigma,d_{\Sigma},m_{\Sigma})$ is a spherical suspension. By Theorem \ref{Friedland-Haymen_inequality}, the second equality above implies that $\overline{\Gamma}_{i}(r)$ are half-spheres. If $(\Sigma,d_{\Sigma},m_{\Sigma})$ is an $(N-1)$-dimensional smooth Riemannian manifold, then, by Theorem \ref{Faber_Krahn}, $(\Sigma,d_{\Sigma},m_{\Sigma})$ is isometric to the sphere $\partial B_{1} \subset \mathbb{R}^{N}$. In this case, $\Gamma_{1}(r),\Gamma_{2}(r)$ are upper and lower half-spheres, i.e. there exists a $\nu(r) \in \partial B_{1}$ such that
	\begin{equation}
		\Gamma_{1}(r) = \{x \in \partial B_{1} : x\cdot \nu(r) >0\},\quad \Gamma_{2}(r) = \{x\in \partial B_{1} : x\cdot \nu(r)<0\}.
	\end{equation}
	Next, the inequality in Lemma \ref{lemma_4_proof_of_main_theorem} must be equality. This requires that
	\begin{equation}
		\lvert \nabla u_{i} \rvert^{2} = \mathbf{\Delta} \frac{u_{i}^{2}}{2},\quad \text{in } B_{r_{2}}(p),
	\end{equation}
	i.e. $u_{i}\mathbf{\Delta}u_{i}=0$ in $B_{r_{2}}(p)$. This implies that $u_{i}$ is harmonic in $B_{r_{2}}(p) \cap \{u_{i}>0\}$. On the other hand, the inequality in Lemma \ref{lemma_2_proof_of_main_theorem} must be equality. This requires that
	\begin{equation}
		\int_{\Gamma_{i}(r)} \lvert \nabla_{\Sigma} u_{i}(r,\cdot)\rvert^{2} \mathrm{d}m_{\Sigma} = \lambda(\Gamma_{i}(r)) \int_{\Gamma_{i}(r)} \lvert u_{i}\rvert^{2}(r,\cdot) \mathrm{d}m_{\Sigma},\quad \text{for almost all } r \in (r_{1},r_{2}),
	\end{equation}
	i.e. $u_{i}(r,\cdot)$ is the eigenfunction. Since $\Gamma_{i}(r)$ are half-spheres, we get that $u_{i}$ has the form
	\begin{equation}
		u_{i}(x) = \big(a_{i}(r)\cdot x\big)^{+},\quad \text{for } r_{1}<r=\lvert x\rvert <r_{2},
	\end{equation}
	where $a_{i} : (r_{1},r_{2}) \to \mathbb{R}^{N}$. A simple calculation gives that
	\begin{equation}
		0=\Delta u_{i}(x) = \big(a_{i}^{\prime\prime}(r)+\frac{N+1}{r}a_{i}^{\prime}(r)\big) \cdot x,\quad \text{in } \{r_{1}<\lvert x\rvert <r_{2}\} \cap \{u_{i}>0\}.
	\end{equation}
	This implies that $a_{i}$ solves the following ODE:
	\begin{equation}
		a_{i}^{\prime\prime}(r) + \frac{N+1}{r} a_{i}^{\prime}(r)=0,\quad r_{1}<r<r_{2}.
	\end{equation}
	Therefore, $a_{i}(r) = \nu_{i} + \frac{\sigma_{i}}{r^{N}}$ for some constant vector $\nu_{i},\sigma_{i} \in \mathbb{R}^{N}$. Hence
	\begin{equation}
		u_{i}(x) = \big((\nu_{i}+\frac{\sigma_{i}}{\lvert x \rvert^{N}}) \cdot x\big)^{+},\quad r_{1}<\lvert x \rvert <r_{2}.
	\end{equation}
	By the unique continuation of harmonic function, we get that
	\begin{equation}
		u_{i}(x) = \big((\nu_{i}+\frac{\sigma_{i}}{\lvert x \rvert^{N}}) \cdot x\big)^{+},\quad 0<\lvert x \rvert <r_{2}.
	\end{equation}
	Finally, since $\lim\limits_{x \to p}u_{i}(x)=0$, we get that $\sigma_{i}=0$. While $J(r_{2})>0$, we get that $\nu_{i}$ is non-zero. Thus $u_{i}$ has the form
	\begin{equation}
		u_{1}(x) = k_{1}(x\cdot \nu)^{+},\quad u_{2}(x) = k_{2}(x\cdot \nu)^{-},\quad \text{in } B_{r_{2}}(p),
	\end{equation}
	for some constant $k_{1},k_{2}>0$ and some unit vector $\nu \in \partial B_{1}$.
\end{proof}

\section*{Conflict of interest}
The author declares that there are no conflicts of interest.

\section*{Funding}
No funding was received.

\section*{Ethics approval}
Ethics approval was not required for this research.

\section*{Data availability}
No data was used for the research described in the article.

\bibliographystyle{plain}
\bibliography{biblio}

\begin{thebibliography}{10}

\bibitem{Alt-Caffarelli-Friedman_1984}
H.~W. Alt, L.~A. Caffarelli, and A.~Friedman.
\newblock Variational problems with two phases and their free boundaries.
\newblock {\em Trans. Amer. Math. Soc.}, 282(2):431--461, 1984.

\bibitem{Ambrosio_2018}
L.~Ambrosio.
\newblock Calculus, heat flow and curvature-dimension bounds in metric measure
  spaces.
\newblock In {\em Proceedings of the {I}nternational {C}ongress of
  {M}athematicians---{R}io de {J}aneiro 2018. {V}ol. {I}. {P}lenary lectures},
  pages 301--340. World Sci. Publ., Hackensack, NJ, 2018.

\bibitem{Ambrosio-Gigli-Savare_2014_2}
L.~Ambrosio, N.~Gigli, and G.~Savar\'{e}.
\newblock Calculus and heat flow in metric measure spaces and applications to
  spaces with {R}icci bounds from below.
\newblock {\em Invent. Math.}, 195(2):289--391, 2014.

\bibitem{Ambrosio-Gigli-Savare_2014}
L.~Ambrosio, N.~Gigli, and G.~Savar\'{e}.
\newblock Metric measure spaces with {R}iemannian {R}icci curvature bounded
  from below.
\newblock {\em Duke Math. J.}, 163(7):1405--1490, 2014.

\bibitem{Ambrosio-Mondino-Savare_2016}
L.~Ambrosio, A.~Mondino, and G.~Savar\'{e}.
\newblock On the {B}akry-\'{E}mery condition, the gradient estimates and the
  local-to-global property of {${RCD}^*(K,N)$} metric measure spaces.
\newblock {\em J. Geom. Anal.}, 26(1):24--56, 2016.

\bibitem{Caffarelli-Salsa_2005}
L.~Caffarelli and S.~Salsa.
\newblock {\em A geometric approach to free boundary problems}, volume~68 of
  {\em Graduate Studies in Mathematics}.
\newblock American Mathematical Society, Providence, RI, 2005.

\bibitem{Caffarelli_1988}
L.~A. Caffarelli.
\newblock A {H}arnack inequality approach to the regularity of free boundaries.
  {III}. {E}xistence theory, compactness, and dependence on {$X$}.
\newblock {\em Ann. Scuola Norm. Sup. Pisa Cl. Sci. (4)}, 15(4):583--602
  (1989), 1988.

\bibitem{Caffarelli_1993}
L.~A. Caffarelli.
\newblock A monotonicity formula for heat functions in disjoint domains.
\newblock In {\em Boundary value problems for partial differential equations
  and applications}, volume~29 of {\em RMA Res. Notes Appl. Math.}, pages
  53--60. Masson, Paris, 1993.

\bibitem{Caffarelli-Jerison-Kenig_2002}
L.~A. Caffarelli, D.~Jerison, and C.~E. Kenig.
\newblock Some new monotonicity theorems with applications to free boundary
  problems.
\newblock {\em Ann. of Math. (2)}, 155(2):369--404, 2002.

\bibitem{Caffarelli-Kenig_1998}
L.~A. Caffarelli and C.~E. Kenig.
\newblock Gradient estimates for variable coefficient parabolic equations and
  singular perturbation problems.
\newblock {\em Amer. J. Math.}, 120(2):391--439, 1998.

\bibitem{Cavalletti-Milman_2021}
F.~Cavalletti and E.~Milman.
\newblock The globalization theorem for the curvature-dimension condition.
\newblock {\em Invent. Math.}, 226(1):1--137, 2021.

\bibitem{Chan-Zhang-Zhu_2021}
C.-K. Chan, H.-C. Zhang, and X.-P. Zhu.
\newblock One-phase free boundary problems on rcd metric measure spaces.
\newblock {\em arXiv}, 2112.06962, 2021.

\bibitem{Cheeger_1999}
J.~Cheeger.
\newblock Differentiability of {L}ipschitz functions on metric measure spaces.
\newblock {\em Geom. Funct. Anal.}, 9(3):428--517, 1999.

\bibitem{DePhilippis-Gigli_2018}
G.~De~Philippis and N.~Gigli.
\newblock Non-collapsed spaces with {R}icci curvature bounded from below.
\newblock {\em J. \'{E}c. polytech. Math.}, 5:613--650, 2018.

\bibitem{Edquist-Petrosyan_2008}
A.~Edquist and A.~Petrosyan.
\newblock A parabolic almost monotonicity formula.
\newblock {\em Math. Ann.}, 341(2):429--454, 2008.

\bibitem{Erbar-Kuwada-Sturm_2015}
M.~Erbar, K.~Kuwada, and K.-T. Sturm.
\newblock On the equivalence of the entropic curvature-dimension condition and
  {B}ochner's inequality on metric measure spaces.
\newblock {\em Invent. Math.}, 201(3):993--1071, 2015.

\bibitem{Friedland-Hayman_1976}
S.~Friedland and W.~K. Hayman.
\newblock Eigenvalue inequalities for the {D}irichlet problem on spheres and
  the growth of subharmonic functions.
\newblock {\em Comment. Math. Helv.}, 51(2):133--161, 1976.

\bibitem{Gigli_2015}
N.~Gigli.
\newblock On the differential structure of metric measure spaces and
  applications.
\newblock {\em Mem. Amer. Math. Soc.}, 236(1113):vi+91, 2015.

\bibitem{Hajlasz_Koskela_2000}
P.~Haj\l~asz and P.~Koskela.
\newblock Sobolev met {P}oincar\'{e}.
\newblock {\em Mem. Amer. Math. Soc.}, 145(688):x+101, 2000.

\bibitem{Kawohl_1986}
Bernhard Kawohl.
\newblock On the isoperimetric nature of a rearrangement inequality and its
  consequences for some variational problems.
\newblock {\em Arch. Rational Mech. Anal.}, 94(3):227--243, 1986.

\bibitem{Ketterer_2015}
C.~Ketterer.
\newblock Cones over metric measure spaces and the maximal diameter theorem.
\newblock {\em J. Math. Pures Appl. (9)}, 103(5):1228--1275, 2015.

\bibitem{Lott-Villani_2009}
J.~Lott and C.~Villani.
\newblock Ricci curvature for metric-measure spaces via optimal transport.
\newblock {\em Ann. of Math. (2)}, 169(3):903--991, 2009.

\bibitem{Matevosyan-Petrosyan_2011}
N.~Matevosyan and A.~Petrosyan.
\newblock Almost monotonicity formulas for elliptic and parabolic operators
  with variable coefficients.
\newblock {\em Comm. Pure Appl. Math.}, 64(2):271--311, 2011.

\bibitem{Miranda_2003}
Michele Miranda, Jr.
\newblock Functions of bounded variation on ``good'' metric spaces.
\newblock {\em J. Math. Pures Appl. (9)}, 82(8):975--1004, 2003.

\bibitem{Mondino-Semola_2020}
A.~Mondino and D.~Semola.
\newblock Polya-{S}zego inequality and {D}irichlet {$p$}-spectral gap for
  non-smooth spaces with {R}icci curvature bounded below.
\newblock {\em J. Math. Pures Appl. (9)}, 137:238--274, 2020.

\bibitem{Shanmugalingam_2000}
N.~Shanmugalingam.
\newblock Newtonian spaces: an extension of {S}obolev spaces to metric measure
  spaces.
\newblock {\em Rev. Mat. Iberoamericana}, 16(2):243--279, 2000.

\bibitem{Sturm_2006_1}
K.-T. Sturm.
\newblock On the geometry of metric measure spaces. {I}.
\newblock {\em Acta Math.}, 196(1):65--131, 2006.

\bibitem{Sturm_2006_2}
K.-T. Sturm.
\newblock On the geometry of metric measure spaces. {II}.
\newblock {\em Acta Math.}, 196(1):133--177, 2006.

\bibitem{Teixeira-Zhang_2011_2}
E.~V. Teixeira and L.~Zhang.
\newblock A local parabolic monotonicity formula on {R}iemannian manifolds.
\newblock {\em J. Geom. Anal.}, 21(3):513--526, 2011.

\bibitem{Teixeira-Zhang_2011_1}
E.~V. Teixeira and L.~Zhang.
\newblock Monotonicity theorems for {L}aplace {B}eltrami operator on
  {R}iemannian manifolds.
\newblock {\em Adv. Math.}, 226(2):1259--1284, 2011.

\end{thebibliography}
\end{document}